\documentclass[12pt,a4paper,reqno]{amsart}

\usepackage[utf8]{inputenc}
\usepackage{amssymb,latexsym}
\usepackage{amsmath,amsthm}
\usepackage{enumerate}
\usepackage[all]{xy} \CompileMatrices
\usepackage{amscd}
\usepackage{slashed}
\usepackage{tikz}
\usepackage{float}
\usetikzlibrary{matrix}
\usepackage{enumitem}
\usepackage{url}
 
\usepackage[a4paper,
left=1in,right=1in,top=1.2in,bottom=1.2in,%
footskip=.25in]{geometry}

\usepackage[colorlinks=false,pdfborderstyle={/S/U/W 0}]{hyperref}

\SelectTips{cm}{12}
\theoremstyle{plain}
\newtheorem{theorem}{Theorem}[section]
\newtheorem{lemma}[theorem]{Lemma}
\newtheorem{corollary}[theorem]{Corollary}
\newtheorem{proposition}[theorem]{Proposition}

\theoremstyle{definition}
\newtheorem{definition}[theorem]{Definition}
\newtheorem{definition-theorem}[theorem]{Definition-Theorem}

\theoremstyle{remark}
\newtheorem{remark}[theorem]{Remark}

\numberwithin{equation}{section}
\setlist[itemize]{leftmargin=*}
 
\newcommand{\tr}{\mathrm{tr}}

\newcommand{\Ker}{\mathrm{Ker}}

\newcommand{\surj}{\to\kern-1.8ex\to}

\newcommand{\Diff}{\mathrm{Diff}}

\newcommand{\Conf}{\mathrm{Conf}}

\newcommand{\Met}{\mathrm{Met}}
\newcommand{\Ric}{\mathrm{Ric}}

\newcommand{\cE}{\mathcal{E}}

\newcommand{\cR}{\mathcal{R}}

\newcommand{\cI}{\mathcal{I}}
\newcommand{\dd}{\mathrm{d}}

\newcommand{\owedge}{\mathbin{\bigcirc\mspace{-15mu}\wedge\mspace{3mu}}}
 
\begin{document}

\title[Rigidity of torsionless three-dimensional Heterotic solitons]{Torsionless three-dimensional Heterotic solitons with harmonic curvature are rigid}

\author[Andrei Moroianu]{Andrei Moroianu}
\address{Université Paris-Saclay, CNRS,  Laboratoire de mathématiques d'Orsay, 91405, Orsay, France, 
and Institute of Mathematics “Simion Stoilow” of the Romanian Academy, 21 Calea Grivitei, 010702 Bucharest, Romania}
\email{andrei.moroianu@math.cnrs.fr}

\author[Miguel Pino Carmona]{Miguel Pino Carmona} 
\author[C. S. Shahbazi]{C. S. Shahbazi} \address{Departamento de Matem\'aticas, Universidad UNED - Madrid, Reino de Espa\~na}
\email{mpino185@alumno.uned.es}
\email{cshahbazi@mat.uned.es} 

\thanks{A.M. was partly supported by the PNRR-III-C9-2023-I8 grant CF 149/31.07.2023 {\em Conformal Aspects of Geometry and Dynamics}. M.P.C. was supported by the UNED-Santander 2024 predoctoral fellowship of the Santander Open Academy foundation. C.S.S. was partially supported by the research grant PID2023-152822NB-I00 of the Ministry of Science of the government of Spain. A.M. and C.S.S. would like to thank the \emph{Mathematisches Forschungsinstitut Oberwolfach} for its hospitality and for providing a stimulating research environment.}

\begin{abstract}
We prove the following rigidity result: every compact three-dimensional Heterotic soliton with vanishing torsion and harmonic curvature is rigid, namely, it is an isolated point in the moduli space. 
\end{abstract}

\maketitle


\section{Introduction}
\label{sec:intro}


The primary objective of this note is to prove a rigidity result for the Heterotic soliton system with auxiliary vanishing torsion on a compact three-manifold. Inspired in Heterotic supergravity \cite{BRI,BRII}, the general Heterotic system was introduced in \cite{MPS26}, where its compact three-dimensional solutions with non-vanishing parallel torsion were classified in terms of hyperbolic three-manifolds and compact quotients of the Heisenberg group equipped with a left-invariant metric. The classification of compact Heterotic solitons with vanishing auxiliary torsion is notably more difficult and remains an open problem. For vanishing auxiliary torsion, the three-dimensional Heterotic soliton system for non-flat solitons reduces to the following system of equations:
\begin{eqnarray}
\Ric^g +  \nabla^g\dd\phi - \frac12{e^{2\phi}}g + \kappa\, \cR^g \circ_g\cR^g &= 0 \,, \label{eq:einstein}	 \\
\dd_{\nabla^g}^{\ast}\cR^g + \cR^g(\dd\phi) &= 0 \,, \label{eq:yangmills} \\ 
\delta^g\dd\phi + \vert\dd\phi\vert^2_g  - e^{2\phi}  +  \kappa\, |\cR^g|^2_g &= 0 \,, \label{eq:dilaton}	
\end{eqnarray} 

\noindent
for couples $(g,\phi)$ consisting of a Riemannian metric $g$ on a three-manifold $M$ and a function $\phi\in C^{\infty}(M)$, the so-called \emph{dilaton} of the system. Here $\nabla^g$ denotes the Levi-Civita connection of $g$, $\Ric^g$ is its Ricci tensor, $\cR^g$ is its Riemann tensor, $\dd_{\nabla^g}^{\ast}$ is the formal adjoint of the exterior covariant derivative $\dd_{\nabla^g}$ associated to $\nabla^g$, $\delta^g$ is the formal adjoint of the exterior derivative $\dd$, and where we have defined:
\begin{gather*}
(\cR^g\circ_g \cR^g)(v_1,v_2) := \langle v_1 \lrcorner \cR^g, v_2\lrcorner \cR^g\rangle_g = \frac{1}{2}\sum_{i,j,k=1}^3 \cR^g_{v_1 , e_i}(e_j,e_k) \cR^g_{v_2 , e_i}(e_j, e_k) \,, \\
\vert \cR^g \vert_g^2 : = \langle \cR^g , \cR^g \rangle_g = \frac{1}{2} \sum_{i,j=1}^3 \langle \cR^g_{e_i,e_j} , \cR^g_{e_i,e_j} \rangle_g = \frac{1}{4}\sum_{i,j,k,l=1}^3 \cR^g_{e_i,e_j}(e_k,e_l)\, \cR^g_{e_i,e_j}(e_k,e_l)    
\end{gather*}

\noindent
in terms of any local orthonormal frame $e_i$ and vectors $v_1,v_2 \in \mathfrak{X}(M)$. The symbol  $\langle \cdot , \cdot  \rangle_g$ denotes the \emph{determinant} metric associated to $g$. Its associated norm is denoted by $\vert \cdot \vert_g^2$. The system of equations \eqref{eq:einstein}, \eqref{eq:yangmills}, and \eqref{eq:dilaton} is the object of study of this note. Equation \eqref{eq:einstein} is the \emph{Einstein equation} of the system, whereas \eqref{eq:yangmills} is a Yang-Mills type of equation for the Levi-Civita connection. On the other hand, Equation \eqref{eq:dilaton} is typically called the \emph{dilaton equation} of the system. Solutions $(g,\phi)$ to equations \eqref{eq:einstein}, \eqref{eq:yangmills}, and \eqref{eq:dilaton} are called \emph{Heterotic solitons with vanishing torsion} or {\em torsionless Heterotic solitons}. \medskip
 
\noindent
Currently, all known three-dimensional torsionless compact Heterotic solitons are Einstein with constant dilaton. If non-flat, such a Heterotic soliton is hyperbolic of scalar curvature $-24 \kappa^{-1}$. We refer to these solitons simply as \emph{hyperbolic}, since the hyperbolicity of the metric forces a compact torsionless Heterotic soliton to have constant dilaton. It remains a major open problem to determine whether compact, torsionless Heterotic solitons with non-constant dilaton actually exist. A natural strategy for constructing a non-flat, torsionless Heterotic soliton with a non-constant dilaton is to deform a hyperbolic one. The main theorem of this note proves that this is not possible, not even if we allow the curvature of $g$ to be harmonic instead of strictly hyperbolic. 

\begin{theorem} 
\label{thm:rigidityessentialharmonic}
Let $(g,\phi)$ be a torsionless, non-flat, three‑dimensional compact Heterotic soliton with vanishing torsion and harmonic curvature. Then, the vector space of essential deformations of $(g,\phi)$ is zero-dimensional.
\end{theorem}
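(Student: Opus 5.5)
The plan is to reduce first to the hyperbolic case, and then linearize the system there, using the three‑dimensional identity that expresses the Riemann tensor through the Schouten tensor.

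\textbf{Step 1: reduction to the hyperbolic case.} Harmonic curvature means $\dd^{\ast}_{\nabla^g}\cR^g=0$. In dimension three this says that $\Ric^g$ is a Codazzi tensor, and hence that the scalar curvature $s^g$ is constant. Equation \eqref{eq:yangmills} then reduces to the algebraic condition $\cR^g(\dd\phi)=0$, i.e. $\cR^g_{X,\nabla\phi}=0$ for all $X$.

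On the open set $U=\{\dd\phi\neq 0\}$, write $\cR^g=P\owedge g$, with $P$ the Schouten tensor. Then $\nabla\phi$ lies in the kernel of the curvature operator. This forces $\Ric^g$ to have the special form $\Ric^g(\nabla\phi,\cdot)=0$, with the remaining two eigenvalues equal to $s^g/2$. Inserting this into \eqref{eq:einstein} and \eqref{eq:dilaton} along $\nabla\phi$, and using the Codazzi property, should give an overdetermined ODE along the flow lines of $\nabla\phi$. I will show it is incompatible with non‑flatness and with the maximum principle for \eqref{eq:dilaton} at the extrema of $\phi$. The conclusion is that $U$ is empty, so $\phi$ is constant.

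Once $\phi$ is constant, \eqref{eq:einstein} makes $\Ric^g$ an algebraic function of itself. A trace comparison with \eqref{eq:dilaton} then shows that $g$ is Einstein, hence hyperbolic with $s^g=-24\kappa^{-1}$, as described in the introduction. If this reduction is already among the earlier results of the paper, I will simply invoke it.

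\textbf{Step 2: linearization at the hyperbolic soliton.} Let $(\dot g,\dot\phi)$ be an essential deformation, i.e. an infinitesimal solution taken modulo the action of diffeomorphisms. Using Ebin's slice, I will impose the gauge $\delta^g\dot g=0$ and decompose
\[
\dot g=h+\tfrac13 f g, \qquad \tr_g h=0 .
\]
The key computational input is the three‑dimensional identity
\[
\cR^g\circ_g\cR^g=\Phi(\Ric^g),
\]
an explicit quadratic polynomial in $\Ric^g$ and $s^g$ obtained from $\cR^g=P\owedge g$. This turns \eqref{eq:einstein} into a second‑order equation in $g$ alone, plus the Hessian of $\phi$. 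Its linearization is then an explicit combination of $\Ric'$, $s'$ and $\nabla\dd\dot\phi$. The linearization of \eqref{eq:yangmills} at a space form reduces to $(\dd^{\ast}_{\nabla}\cR)'+\cR(\dd\dot\phi)=0$, and the first term is a multiple of the Codazzi operator applied to $\Ric'$.

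\textbf{Step 3: decoupling and vanishing.}
\begin{itemize}
\item \emph{Scalar part.} Taking the trace of the linearized Einstein equation and subtracting the linearized dilaton equation gives a scalar system for $(f,\dot\phi)$ of the form $(\Delta+a)\dot\phi+b f=0$ and $(\Delta+c)f+d\,\dot\phi=0$, where the constants $a,b,c,d$ are fixed by $\kappa$ and the curvature $-2/\kappa$. I expect that integrating these against $f$ and $\dot\phi$ over the compact $M$ gives a definite quadratic form, forcing $f=\dot\phi=0$.
\item \emph{Trace‑free part.} Then $h$ satisfies a linearized equation of the form $\Delta_L h+\lambda h=0$, together with the divergence‑free gauge condition.
\item \emph{Conclusion.} The Calabi–Weil / Koiso rigidity of compact hyperbolic three‑manifolds (no non‑trivial infinitesimal Einstein deformations) should then give $h=0$. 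If the constant $\lambda$ differs from the Einstein one because of the $\kappa$ term, I will instead use a Bochner argument for trace‑free, divergence‑free Codazzi‑type tensors on a negatively curved space form. The linearized Yang–Mills equation supplies the Codazzi property of $h$.
\end{itemize}

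\textbf{Main obstacle.} I expect the hardest part to be Step 1, that is, excluding non‑constant dilaton under harmonic curvature. This is where the global argument on the compact manifold really enters, whereas the linearized analysis in Steps 2 and 3 is lengthy but standard. A secondary difficulty is checking the signs of the constants after linearizing $\kappa\,\cR^g\circ_g\cR^g$, since the coercivity in the scalar part of Step 3 depends on them.
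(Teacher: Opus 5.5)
\textbf{Main gap: the scalar part of Step 3.} Your overall plan matches the paper's: reduce to the hyperbolic case, kill the scalar part of the deformation, then apply Koiso to the transverse-traceless part. Your trace-free step is fine: once $f=\dot\phi=0$, the linearized Einstein equation is exactly $\dd_g\Ric^g(h)=\frac13 s_g h$, so the Bochner fallback is unnecessary. The problem is that the trace of the linearized Einstein equation and the linearized dilaton equation do not give a definite quadratic form. In your gauge, $s'=\Delta f-\frac{s_g}{3}f$ with $\Delta=\delta^g\dd$. At $\kappa s_g=-24$, $\kappa e^{2\phi}=48$ the two equations read
\[
-7s'-\Delta\dot\phi-\tfrac{144}{\kappa}\dot\phi=0, \qquad \Delta\dot\phi-\tfrac{96}{\kappa}\dot\phi-4s'=0 .
\]
Adding them gives only $-11s'=\frac{240}{\kappa}\dot\phi$. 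The second equation then becomes $\Delta\dot\phi=\frac{96}{11\kappa}\dot\phi$, an eigenvalue equation with positive eigenvalue, which no integration by parts can exclude. Whenever $\frac{96}{11\kappa}$ lies in the spectrum of $\Delta$, this two-equation system has nonzero solutions $(f,\dot\phi)$.

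\textbf{The missing relation.} You need a further first-order scalar relation. The paper takes it from the trace part of the Yang--Mills equation, Corollary~\ref{cor:ricdphi}. Linearizing $\Ric^g(\dd\phi)=\frac12\dd s_g$ at the hyperbolic background gives $\dd s'=\frac23 s_g\,\dd\dot\phi=-\frac{16}{\kappa}\dd\dot\phi$. This is incompatible with $s'=-\frac{240}{11\kappa}\dot\phi$ unless $\dd\dot\phi=0$. Then $\dot\phi=0$ and $s'=0$, and only now does $(\Delta-\frac{s_g}{3})f=0$ with $s_g<0$ give $f=0$. The divergence of the full linearized Einstein equation, via the linearized contracted Bianchi identity in your gauge, would also supply such a relation; its trace alone does not. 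In your plan the linearized Yang--Mills equation is reserved for the trace-free part, where it is not needed.

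\textbf{Step 1.} It is only announced, but your idea closes in one line. Since $\cR^g(\dd\phi)=0$, both $\Ric^g(\dd\phi)$ and $(\cR^g\circ_g\cR^g)(\nabla\phi,\cdot)$ vanish. Evaluating \eqref{eq:einstein} on $\nabla\phi$ then gives $\dd\bigl(|\dd\phi|_g^2-\frac12e^{2\phi}\bigr)=0$ on all of $M$. Comparing values at a maximum and a minimum of $\phi$ shows $\phi$ is constant. The paper instead uses $|\dd\phi|_g^2-\frac52e^{2\phi}$ on $U$ together with a density argument. However, once $\phi$ is constant, a trace comparison does not yield Einstein. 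The pointwise equations admit Ricci eigenvalues $(s_g+\frac2\kappa,-\frac1\kappa,-\frac1\kappa)$ with $\kappa s_g=-4\pm2\sqrt2$. You therefore need the Codazzi argument of Proposition~\ref{prop:classificationphiconstant}, which you can invoke directly.
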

 
\noindent
The notion of \emph{essential deformation}  that we consider is the natural generalization of Koisos's \cite{Koiso}. That is, an essential deformation is an infinitesimal deformation orthogonal to the infinitesimal deformations generated by the action of diffeomorphisms. By the previous theorem, torsionless Heterotic solitons with harmonic curvature are infinitesimally rigid. In turn, due to the existence of a slice and a local Kuranishi model, which follow from the general theory of Diez and Rudolph \cite{DiezRudolph,DiezRudolphII}, this implies that they are also isolated points in the moduli space of torsionless Heterotic solitons. Alternatively, we can think of Theorem \ref{thm:rigidityessential} as a natural generalization of the Mostow rigidity theorem to the Heterotic soliton system. \medskip

\noindent
Theorem \ref{thm:rigidityessentialharmonic} is proved in two steps. First, in Theorem \ref{thm:rigidityessential} we prove that three-dimensional \emph{hyperbolic} compact Heterotic solitons are infinitesimally rigid, which, combined with Corollary \ref{cor:rigidity}, proves rigidity. Then, in Theorem \ref{thm:harmonic} we prove that non-flat three-dimensional compact Heterotic solitons with harmonic curvature are, in fact, hyperbolic, and consequently also rigid. 
 

\section{Preliminaries}


Let $M$ be an oriented three-dimensional manifold equipped with a Riemannian metric $g$. Throughout the paper, when there is no risk of confusion, we will identify 1-forms with vector fields and 2-forms with skew-symmetric endomorphisms using the metric. In our conventions, the Riemann tensor $\cR^g$ of $g$ is defined by:
\begin{equation*}
\cR^g_{v_1,v_2} v_3 = \nabla^g_{v_1} \nabla^g_{v_2} v_3 - \nabla^g_{v_2} \nabla^g_{v_1} v_3 - \nabla^g_{[v_1,v_2]} v_3 \,, \qquad v_1, v_2, v_3\in \mathfrak{X}(M) \,.
\end{equation*}

\noindent
We will understand the curvature tensor $\cR^g$ as a section of $\Omega^2(M) \otimes \Omega^2(M)$, upon identifying skew-symmetric endomorphisms with two-forms, using the underlying Riemannian metric $g$. The Ricci tensor $\Ric^g$ is defined in our conventions as follows:
\begin{equation*}
\Ric^g(v_1,v_2) = \sum_i^3 \cR^g_{e_i v_1}(v_2,e_i) \,, \qquad v_1,v_2 \in \mathfrak{X}(M) \,,
\end{equation*}
where $\{e_1,e_2,e_3\}$ is an orthonormal frame . The scalar curvature is given by $s_g=\tr_g\Ric^g$. \medskip

\noindent
For further reference, we in introduce the Kulkarni-Nomizu product $\owedge$ acting on symmetric tensors $A, B \in \Gamma(T^*M \otimes T^*M)$ as follows:
\begin{align*}
(A \owedge B)(v_1, v_2, v_3, v_4) &= A(v_1,v_3)B(v_2,v_4) + A(v_2,v_4)B(v_1,v_3) \\
&\hspace{1em} - A(v_1,v_4)B(v_2,v_3) - A(v_2,v_3)B(v_1,v_4) \, , 
\end{align*}

\noindent
for every $v_1, v_2, v_3, v_4 \in \mathfrak{X}(M)$. In dimension three, we can write the Riemannian tensor in terms of the Ricci tensor and the scalar curvature:
\begin{equation}\label{eq:riemann1}
\cR^g = - g \owedge \Ric^g + \frac14{s_g}\,g \owedge g \,.
\end{equation}

\noindent
Alternatively, we can write the Riemann tensor as:
\begin{equation}
\label{eq:riemann2}
\cR^g_{v_1,v_2} = \frac12{s_g}\, v_1 \wedge v_2 + v_2 \wedge \Ric^g(v_1) + \Ric^g(v_2)\wedge v_1 \,, \qquad v_1, v_2 \in \mathfrak{X}(M) \,.
\end{equation}

\noindent
Using the previous formula, it is easy to show that the contraction $\cR^g\circ_g \cR^g$ is given by:
\begin{equation}
\label{eq:squareriemann}
\cR^g\circ_g \cR^g =  -  \Ric^g\circ_g \Ric^g +  s_g \Ric^g + \left( \vert \Ric^g\vert^2_g - \frac12{s_g^2} \right) g \,,
\end{equation}

\noindent
where we have defined:
\begin{equation*}
\Ric^g\circ_g \Ric^g(v_1,v_2) := g(\Ric^g(v_1),\Ric^g(v_2)) \,, \qquad v_1, v_2 \in \mathfrak{X}(M) \, .
\end{equation*}

\noindent
In particular, the norm of $\cR^g$ is given by:
\begin{equation}\label{eq:normriemann}
\vert \cR^g \vert_g^2 = \frac{1}{2} \tr_g(\cR^g\circ_g \cR^g)  =  \vert \Ric^g \vert_g^2 - \frac{1}{4} s_g^2 \,.
\end{equation}

\noindent
We also recall the standard contracted second Bianchi identity
\begin{equation}\label{eq:bianchi}
\dd_{\nabla^g}^{\ast}\cR^g(v_1,v_2,v_3) = d_{\nabla^g}\Ric^g(v_2,v_3,v_1) \,, \qquad v_1,v_2,v_3 \in \mathfrak{X}(M) \,.
\end{equation}

\noindent
Using these formulas, the three-dimensional Heterotic soliton system given in Equations \eqref{eq:einstein}, \eqref{eq:yangmills}, and \eqref{eq:dilaton} can be further expanded and simplified.

\begin{proposition}\label{prop:heterotic2}
A pair $(g,\phi)$ is a non-flat Heterotic soliton with vanishing torsion on a three-dimensional manifold $M$ if and only if:
\begin{align}
- \kappa \Ric^g\circ_g\Ric^g + (1 + \kappa s_g) \Ric^g + \frac{1}{3}\left( - s_g - \frac{3\kappa}{4} s_g^2 - |\dd\phi|_g^2 + e^{2\phi} \right) \,g + \nabla^g \dd\phi &= 0 \,, \label{eq:einstein2} \\
\dd_{\nabla^g}^{\ast}\left( - g \owedge \Ric^g + \frac14{s_g}\, g \owedge g \right) + \left( - g \owedge \Ric^g + \frac14{s_g}\, g \owedge g \right)(\dd\phi) &= 0 \,, \label{eq:yangmills2} \\
s_g - 3\delta^g\dd\phi - 2|\dd\phi|_g^2 + \frac12e^{2\phi} &= 0 \,. \label{eq:dilaton2}
\end{align}
\end{proposition}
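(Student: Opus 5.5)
The plan is to substitute the three-dimensional curvature identities \eqref{eq:riemann1}, \eqref{eq:squareriemann} and \eqref{eq:normriemann} into \eqref{eq:einstein}, \eqref{eq:yangmills}, \eqref{eq:dilaton}. I would then trade the dilaton equation for a trace identity, and prove each implication separately.

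\emph{Yang--Mills equation.} Equation \eqref{eq:yangmills2} is simply \eqref{eq:yangmills} with $\cR^g$ replaced by the expression \eqref{eq:riemann1}. This equation is not coupled to the other two, so it needs no further work.

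\emph{Forward direction for the scalar equations.} Start from \eqref{eq:einstein} and insert \eqref{eq:squareriemann}. This gives
\begin{equation*}
\Ric^g+\nabla^g\dd\phi-\tfrac12 e^{2\phi}g+\kappa\Big(-\Ric^g\circ_g\Ric^g+s_g\Ric^g+\big(|\Ric^g|_g^2-\tfrac12 s_g^2\big)g\Big)=0 .
\end{equation*}
\begin{itemize}
\item Take the $g$-trace of \eqref{eq:einstein}, using $\tr_g\nabla^g\dd\phi=-\delta^g\dd\phi$ and $\tr_g(\cR^g\circ_g\cR^g)=2|\cR^g|^2_g$ from \eqref{eq:normriemann}. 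This yields $s_g-\delta^g\dd\phi-\tfrac32 e^{2\phi}+2\kappa|\cR^g|^2_g=0$.
\item Eliminate $\kappa|\cR^g|^2_g$ between this trace identity and \eqref{eq:dilaton}, which says $\kappa|\cR^g|_g^2=e^{2\phi}-|\dd\phi|^2_g-\delta^g\dd\phi$. The result is exactly \eqref{eq:dilaton2}.
\item For \eqref{eq:einstein2}, only the coefficient of $g$ must be rewritten. By \eqref{eq:normriemann}, $\kappa(|\Ric^g|^2_g-\tfrac12 s_g^2)=\kappa|\cR^g|^2_g-\tfrac{\kappa}{4}s_g^2$.
\item Solve the trace identity together with \eqref{eq:dilaton2} for $\kappa|\cR^g|^2_g$. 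Substituting $\delta^g\dd\phi=\tfrac13(s_g-2|\dd\phi|^2_g+\tfrac12e^{2\phi})$ gives $\kappa|\cR^g|^2_g=-\tfrac13 s_g-\tfrac13|\dd\phi|^2_g+\tfrac56e^{2\phi}$.
\item Hence $-\tfrac12e^{2\phi}+\kappa|\cR^g|^2_g-\tfrac\kappa4 s_g^2=\tfrac13\big(-s_g-\tfrac{3\kappa}{4}s_g^2-|\dd\phi|^2_g+e^{2\phi}\big)$, which is precisely the $g$-coefficient in \eqref{eq:einstein2}.
\end{itemize}

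\emph{Converse.} Assume \eqref{eq:einstein2}, \eqref{eq:yangmills2}, \eqref{eq:dilaton2}. Reading the previous computation backwards shows that \eqref{eq:einstein2} is equivalent to \eqref{eq:einstein} plus the relation obtained above for $\kappa|\cR^g|^2_g$, given \eqref{eq:dilaton2}. More concretely, take the trace of \eqref{eq:einstein2} and use \eqref{eq:normriemann}. This expresses $\kappa|\cR^g|_g^2$ in terms of $s_g$, $\delta^g\dd\phi$, $|\dd\phi|^2_g$ and $e^{2\phi}$. Combining that expression with \eqref{eq:dilaton2} recovers \eqref{eq:dilaton}, and substituting back recovers \eqref{eq:einstein}. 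Equation \eqref{eq:yangmills} follows from \eqref{eq:yangmills2} by \eqref{eq:riemann1}.

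\emph{Main obstacle.} No step is conceptually hard. The only real risk is sign and normalization bookkeeping: the factor $\tfrac12$ in the definition of $|\cR^g|^2_g$ with the determinant metric, the sign convention $\tr_g\nabla^g\dd\phi=-\delta^g\dd\phi$, and the precise coefficients in \eqref{eq:squareriemann}. I would double-check these on the round or hyperbolic space form before trusting the final constants. The non-flatness hypothesis plays no role in the algebra; it only records that we are in the regime where the system takes this form.
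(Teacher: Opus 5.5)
Your proposal is correct and follows essentially the same route as the paper: \eqref{eq:dilaton2} from the trace of \eqref{eq:einstein} minus twice \eqref{eq:dilaton}, \eqref{eq:yangmills2} via \eqref{eq:riemann1}, and the $g$-coefficient of \eqref{eq:einstein2} by eliminating the curvature norm with the trace identity and \eqref{eq:dilaton2}. You track $\kappa|\cR^g|^2_g$ where the paper tracks $\kappa|\Ric^g|^2_g$, which is equivalent by \eqref{eq:normriemann}, and all your constants check out. Your converse is more explicit than the paper's ``do the computations backwards''; one small remark is that the trace of \eqref{eq:einstein2}, after using \eqref{eq:normriemann}, is already literally \eqref{eq:dilaton}, so \eqref{eq:dilaton2} is needed only afterwards, to restore the $g$-coefficient of \eqref{eq:einstein}.
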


\begin{proof}
First, subtracting twice the dilaton equation \eqref{eq:dilaton} from the trace of the Einstein equation \eqref{eq:einstein} yields \eqref{eq:dilaton2}. \medskip

\noindent
On the other hand, by the general formula \eqref{eq:riemann1}, the Yang-Mills equation \eqref{eq:yangmills} is equivalent to \eqref{eq:yangmills2}. \medskip

\noindent
Now, using \eqref{eq:squareriemann}, the Einstein equation \eqref{eq:einstein} can be equivalently written
\begin{equation}\label{eq:einsteinexp1}
- \kappa \Ric^g\circ_g\Ric^g + (1 + \kappa s_g) \Ric^g + \left( \kappa \,|\Ric^g|_g^2 - \frac{\kappa}{2} s_g^2 - \frac{1}{2} e^{2\phi} \right) \,g + \nabla^g \dd\phi = 0 \,.
\end{equation}
Taking the trace of \eqref{eq:einsteinexp1} shows
\begin{align*}
\kappa \,|\Ric^g|_g^2 &= \frac{1}{2}\left( - s_g + \frac{\kappa}{2}s_g^2 + \frac{3}{2}e^{2\phi} + \delta^g\dd\phi \right) = \frac{1}{2}\left( - \frac{2}{3}s_g + \frac{\kappa}{2}s_g^2 + \frac{5}{3}e^{2\phi} - \frac{2}{3}|\dd\phi|_g^2 \right) \,,
\end{align*}
where in the second equality we have used \eqref{eq:dilaton2} in order to express $\delta^g\dd\phi$. Substituting back into \eqref{eq:einsteinexp1} yields
\begin{equation*}
- \kappa \Ric^g\circ_g\Ric^g + (1 + \kappa s_g) \Ric^g + \frac{1}{3}\left( - s_g - \frac{3\kappa}{4} s_g^2 - |\dd\phi|_g^2 + e^{2\phi} \right) \,g + \nabla^g \dd\phi = 0 \,,
\end{equation*}
which is \eqref{eq:einstein2}. Doing the computations backwards proves the converse.
\end{proof}

\noindent
In the following, we will consider the two formulations of the three-dimensional Heterotic soliton presented (and, occasionally, a combination of them) as most convenient.

\begin{corollary}\label{cor:ricdphi}
Let $(g,\phi)$ be a three-dimensional Heterotic soliton with vanishing torsion. Then:
\begin{equation}\label{eq:ricdphi}
\Ric^g(\dd\phi) = \frac{1}{2}\dd s_g .
\end{equation}
\end{corollary}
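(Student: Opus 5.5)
The plan is to obtain \eqref{eq:ricdphi} as a suitable trace of the Yang--Mills equation \eqref{eq:yangmills}, using the contracted second Bianchi identity \eqref{eq:bianchi}. Neither the Einstein equation nor the dilaton equation should be needed.

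First I will rewrite the divergence term. By \eqref{eq:bianchi},
\[
\dd_{\nabla^g}^{\ast}\cR^g(v_1,v_2,v_3)=\dd_{\nabla^g}\Ric^g(v_2,v_3,v_1)=(\nabla^g_{v_2}\Ric^g)(v_3,v_1)-(\nabla^g_{v_3}\Ric^g)(v_2,v_1).
\]
I then set $v_1=v_3=e_i$ and sum over a local orthonormal frame. The first term gives $\dd s_g(v_2)$, since trace commutes with $\nabla^g$. The second term gives $(\delta^g\Ric^g)$ up to sign, namely $\sum_i(\nabla^g_{e_i}\Ric^g)(e_i,v_2)=\frac12\dd s_g(v_2)$ by the classical contracted Bianchi identity. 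Hence the trace of $\dd_{\nabla^g}^{\ast}\cR^g$ over its first and last slots is $\frac12\dd s_g$.

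Second, I take the same trace of the term $\cR^g(\dd\phi)$. In the slot convention matching \eqref{eq:bianchi}, $\cR^g(\dd\phi)(v_1,v_2,v_3)$ is $\cR^g$ with $\dd\phi^\sharp$ inserted in the first slot. After tracing slots one and three as above, the definition $\Ric^g(v_1,v_2)=\sum_i\cR^g_{e_i,v_1}(v_2,e_i)$ turns it into $\pm\Ric^g(\dd\phi,v_2)$. Adding the two contributions, the traced equation \eqref{eq:yangmills} becomes $\frac12\dd s_g\pm\Ric^g(\dd\phi)=0$.

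The only real obstacle is fixing this sign, i.e.\ pinning down exactly how $\cR^g(\dd\phi)$ and $\dd_{\nabla^g}^{\ast}$ are contracted so that both are consistent with \eqref{eq:bianchi}. To settle it independently, I would cross-check against the divergence of the Einstein equation. Using $\delta^g\nabla^g\dd\phi=-\dd\Delta\phi\cdot(\pm1)+\Ric^g(\dd\phi)$ (the Bochner formula) together with the dilaton equation \eqref{eq:dilaton}, one should recover $\Ric^g(\dd\phi)=\frac12\dd s_g$. This agrees with the standard identity for gradient Ricci solitons, which fixes the sign as $\Ric^g(\dd\phi)=\frac12\dd s_g$. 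With the sign confirmed, the traced Yang--Mills equation yields \eqref{eq:ricdphi} directly.
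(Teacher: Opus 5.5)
Your strategy is the paper's: trace the Yang--Mills equation \eqref{eq:yangmills} and apply the contracted Bianchi identity. Your evaluation of the traced divergence term as $\frac12\dd s_g$ is also correct. The gap is the sign of the $\cR^g(\dd\phi)$ contribution: you leave it as $\pm$ and propose to fix it via the divergence of \eqref{eq:einstein}, and that check cannot work.

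Computing $\mathrm{div}(\cR^g\circ_g\cR^g)$ already uses the Yang--Mills equation, and gives $(\cR^g\circ_g\cR^g)(\dd\phi)+\frac12\dd|\cR^g|^2_g$. Suppose you eliminate $(\cR^g\circ_g\cR^g)(\dd\phi)$ using \eqref{eq:einstein} contracted with $\dd\phi$. Then the Bochner term $\Ric^g(\dd\phi)$ cancels, and all that survives is $\dd$ of a multiple of the left-hand side of \eqref{eq:dilaton2}, which vanishes automatically. This is just the Noether identity of a diffeomorphism-invariant system, so it carries no information about $\Ric^g(\dd\phi)$. Suppose instead you imitate the gradient-soliton argument and use the trace of \eqref{eq:einstein} to eliminate $\dd\delta^g\dd\phi$. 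Then you get $\Ric^g(\dd\phi)-\frac12\dd s_g=-2e^{2\phi}\dd\phi+\frac32\kappa\,\dd|\cR^g|^2_g-\kappa(\cR^g\circ_g\cR^g)(\dd\phi)$, and showing that the right-hand side vanishes is equivalent to the corollary itself. The analogy with gradient Ricci solitons breaks down precisely because $\frac12 e^{2\phi}$ is not constant and because of the term $\kappa\,\cR^g\circ_g\cR^g$. As written, your argument establishes only $\Ric^g(\dd\phi)=\mp\frac12\dd s_g$.

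The sign needs no external check; fix it as the paper does. In components, \eqref{eq:yangmills} reads $-\sum_j(\nabla^g_{e_j}\cR^g)(e_j,v_1,v_2,v_3)+\cR^g(\dd\phi,v_1,v_2,v_3)=0$. So both terms are $\cR^g$ (or $\nabla^g_{e_j}\cR^g$, which has the same algebraic symmetries) with a vector in the first slot. Pair symmetry and the definition of $\Ric^g$ give $\sum_i\cR^g(X,e_i,v,e_i)=-\Ric^g(v,X)$ for every $X$. Hence your trace $v_1=v_3=e_i$ yields $\sum_j(\nabla^g_{e_j}\Ric^g)(e_j,v)-\Ric^g(\dd\phi,v)=0$, and the contracted Bianchi identity turns the first term into $\frac12\dd s_g(v)$.

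Contracting both terms by the same rule makes the relative sign automatic. The paper traces $v_1=v_2$ instead, which only flips the overall sign. The apparent ambiguity in your write-up comes from treating the divergence term via \eqref{eq:bianchi} and the zeroth-order term by a separate contraction.
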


\begin{proof}
Let $\{e_1, e_2, e_3\}$ be a local orthonormal frame. The Yang--Mills equation reads
\begin{equation*}
- \nabla^g_{e_j}\cR^g(e_j,v_1, v_2, v_3) + \cR^g(\dd\phi,v_1, v_2, v_3) = 0, \qquad v_1, v_2, v_3 \in \mathfrak{X}(M) \,,
\end{equation*}
with the summation convention over repeating subscripts. Setting $v_1=v_2=e_i$ and summing over $i$ gives
\begin{equation*}
- \nabla^g_{e_j} \left(\cR^g(e_j,e_i,e_i,v_3)\right) + \cR^g(\dd\phi,e_i,e_i,v_3) = 0 \,.
\end{equation*}
Since $\cR^g(e_i,\cdot,\cdot,e_i)=\Ric^g$ by definition, using the symmetries of $\cR^g$ we obtain
\begin{equation*}
- \nabla^g_{e_j}\Ric^g(e_j,v_3) + \Ric^g(\dd\phi,v_3) = 0 \,.
\end{equation*}
The contracted second Bianchi identity reads $\nabla^g_{e_j}\Ric^g(e_j,v_3)=\frac{1}{2}\,\dd s_g(v_3)$,
which together with the previous equation gives \eqref{eq:ricdphi}.
\end{proof}

\begin{remark}\label{rem:yangmills}
Viewing the Riemann tensor $\cR^g$ as a section of $\Omega^2(M) \otimes \Omega^2(M)$, the Yang--Mills equation becomes an equation in $\Omega(M) \otimes \Omega^2(M)$. Via the Hodge star operator, we obtain an isomorphism $\Omega^1(M) \otimes \Omega^2(M) \cong \Omega^1(M) \otimes \Omega^1(M)$, which decomposes into trace, skew-symmetric, and traceless symmetric parts. Identity \eqref{eq:ricdphi} is precisely the trace component of this decomposition.
\end{remark}

\noindent
We proceed to characterize traceless Heterotic solitons with constant dilaton.
\begin{proposition}\label{prop:classificationphiconstant}
A non-flat compact Heterotic soliton with vanishing torsion $(g,\phi)$ has constant dilaton if and only if it is hyperbolic, in which case $\kappa s_g = - 24$, and $\kappa e^{2\phi}= 48$.
\end{proposition}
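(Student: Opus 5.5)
The proof has two directions: a direct verification for the converse, and an analysis of the field equations when $\phi$ is constant for the forward direction.

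\emph{Converse.} If $g$ is hyperbolic, then $\Ric^g=\frac{s_g}{3}g$ with $s_g$ constant, so $\dd_{\nabla^g}\Ric^g=0$, hence $\dd^{\ast}_{\nabla^g}\cR^g=0$ by \eqref{eq:bianchi}. Corollary \ref{cor:ricdphi} then gives $\frac{s_g}{3}\dd\phi=0$, so $\phi$ is constant because $s_g\neq 0$. It remains to check the constants, which is the same computation as at the end of the forward Einstein case below.

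\emph{Forward direction: reduction to a quadratic.} Assume $\dd\phi=0$. Equation \eqref{eq:dilaton2} gives $s_g=-\frac12 e^{2\phi}$, which is constant. Equation \eqref{eq:einstein2} then becomes a pointwise algebraic identity for the Ricci endomorphism:
\[
-\kappa \Ric^g\circ_g\Ric^g+(1+\kappa s_g)\Ric^g-\Big(s_g+\frac{\kappa}{4}s_g^2\Big)g=0 .
\]
So every eigenvalue of $\Ric^g$ is a root of the fixed quadratic $-\kappa x^2+(1+\kappa s_g)x-(s_g+\frac{\kappa}{4}s_g^2)=0$, whose coefficients are constant.

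\emph{Forward direction: constant eigenvalues.} The eigenvalues take values in a set of at most two constants and their sum is the constant $s_g$. By continuity and connectedness of $M$ (a standing assumption I adopt), the multiplicity pattern is globally constant. Hence either $\Ric^g$ is a multiple of $g$, or it has two distinct constant eigenvalues $\lambda$ (of multiplicity two) and $\mu$ (simple).

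\emph{Einstein case.} Substituting $\Ric^g=\frac{s_g}{3}g$ into the identity gives $-\frac{2}{3}s_g-\frac{\kappa}{36}s_g^2=0$. Non-flatness forces $s_g\neq0$, since a three-dimensional Einstein metric with $s_g=0$ is flat. Therefore $\kappa s_g=-24$ and, from $e^{2\phi}=-2s_g$, also $\kappa e^{2\phi}=48$. As a consistency check, \eqref{eq:dilaton} with \eqref{eq:normriemann} gives $\kappa\,s_g^2/12=e^{2\phi}=-2s_g$, which agrees.

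\emph{Non-Einstein case.} Here I use the Yang--Mills equation with $\dd\phi=0$, which gives $\dd^*_{\nabla^g}\cR^g=0$, so $\Ric^g$ is a Codazzi tensor by \eqref{eq:bianchi}. I would then show that a Codazzi tensor with constant eigenvalues of multiplicities $(2,1)$ in dimension three is parallel. Write $\xi$ for the unit eigenvector of $\mu$, defined on a double cover if necessary. The Codazzi identity $(\nabla_X\Ric)Y=(\nabla_Y\Ric)X$ with constant eigenvalues yields two facts: $\xi$ is geodesic, and the $\lambda$-eigendistribution $\xi^\perp$ is integrable with totally geodesic leaves. Together these give $\nabla\xi=0$. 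Hence $g$ is locally a product $\mathbb{R}\times\Sigma$ with $\Sigma$ a surface of constant curvature $K$, so $\mu=0$, $\lambda=K$ and $s_g=2K$.

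\emph{Ruling out the product.} Since $0$ is a root of the quadratic, $s_g(1+\frac{\kappa}{4}s_g)=0$. Non-flatness excludes $s_g=0$, so $\kappa K=-2$. The other root is $(1+\kappa s_g)/\kappa=-3/\kappa$, but it must equal $K=-2/\kappa$, a contradiction. Thus only the hyperbolic case survives.

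\emph{Main obstacle.} The step I expect to be hardest is the Codazzi argument showing $\nabla\Ric^g=0$. If the direct eigendistribution computation gets messy, my fallback is the Bochner formula for Codazzi tensors. Since $|\Ric^g|^2$ is constant, it gives $|\nabla\Ric^g|^2=-\frac12\sum_{i,j}K_{ij}(\lambda_i-\lambda_j)^2$. I would then evaluate the sectional curvatures $K_{ij}$ through \eqref{eq:riemann2}, which gives $K_{ij}=\frac{s_g}{2}-\lambda_k$ for $\{i,j,k\}=\{1,2,3\}$. The aim is to check, using the root relations, that the right-hand side is either forced to vanish or has the wrong sign.
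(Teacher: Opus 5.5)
Your proof is correct and follows the paper's route. You use the same quadratic constraint on the Ricci eigenvalues from \eqref{eq:einstein2}, the Codazzi property of $\Ric^g$ from the Yang--Mills equation, the Einstein versus local $\mathbb{R}\times\Sigma$ dichotomy (which you derive by hand where the paper cites Besse, \S16.12), and the exclusion of the product case via the roots; you also supply the converse via Corollary~\ref{cor:ricdphi}, which the paper's proof leaves implicit.

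The Codazzi step you flagged as hardest is actually immediate. Put $X=\xi$ and $Y\perp\xi$ into the Codazzi identity for $\Ric^g=\lambda g+(\mu-\lambda)\,\xi\otimes\xi$. This gives $\nabla_Y\xi=g(\nabla_\xi\xi,Y)\,\xi$, where the left side is orthogonal to $\xi$ and the right side is parallel to it, so $\nabla\xi=0$.

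This matters because your Bochner fallback would not close on its own. The root relations give $\kappa\lambda=-1$ and $\kappa\mu=-2\pm2\sqrt2$. On one of these branches $\mu<0$, so $-\mu(\mu-\lambda)^2>0$ and there is no sign contradiction.
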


\begin{proof}
First, assume that $\phi$ is constant. Then the system \eqref{eq:einstein2}--\eqref{eq:dilaton2} reduces to:
\begin{align}
- \kappa \Ric^g \circ_g \Ric^g + (1 + \kappa s_g) \Ric^g - \Bigl( \frac{\kappa}{4} s_g^2 + s_g \Bigr) g &= 0, \label{eq:einsteinconst} \\
d_{\nabla^g}\Ric^g &= 0, \label{eq:yangmillsconst} \\
s_g + \frac{1}{2} e^{2\phi} &= 0\,, \label{eq:dilatonconst}
\end{align}
where in the first equation we used \eqref{eq:dilatonconst} in order to eliminate $e^{2\phi}$ and in the second equation we used \eqref{eq:bianchi}.
Equation \eqref{eq:dilatonconst} shows that $s_g$ is a negative constant, and Equation \eqref{eq:yangmillsconst} says that $\Ric^g$ is a Codazzi tensor, which has constant eigenvalues thanks to \eqref{eq:einsteinconst}. By \cite[\S16.12]{Besse}, $(M,g)$ is either Einstein or locally a product $\mathbb{R}\times\Sigma$ where $\Sigma$ is a surface of constant curvature. In either case, the eigenvalues are globally constant, and, by Equation \eqref{eq:einsteinconst}, each eigenvalue $\lambda$ satisfies the quadratic equation
\begin{equation}\label{eq:quad}
\kappa\lambda^2 - (1+\kappa s_g)\lambda + \left(\frac{\kappa}{4}s_g^2 + s_g\right) = 0 \,.
\end{equation}

\noindent
We now examine the two possible geometric structures. \medskip

\noindent
If $(M,g)$ is locally a product $\mathbb{R}\times\Sigma$, then $\Ric^g$ has eigenvalues $(0,\mu,\mu)$ with $\mu\neq0$ and $s_g = 2\mu$. Setting $\lambda=0$ in \eqref{eq:quad} gives $\frac{\kappa}{4}s_g^2 + s_g = 0$, whence $s_g = -\frac4\kappa$ (since $s_g\neq0$). Thus $\mu = \frac12{s_g} = -\frac2\kappa$.  
Now the eigenvalue $\mu$ must also satisfy \eqref{eq:quad}; but substituting $\mu=-\frac2\kappa$ and $s_g=-\frac4\kappa$ into the left-hand side of \eqref{eq:quad} gives
\begin{equation*}
\kappa\cdot\frac{4}{\kappa^2} - \left(1-\kappa\cdot\frac{4}{\kappa}\right)\left(-\frac{2}{\kappa}\right) + \left(\frac{\kappa}{4}\cdot\frac{16}{\kappa^2} - \frac{4}{\kappa}\right) = \frac{4}{\kappa} - \frac{6}{\kappa} + \frac{4}{\kappa} - \frac{4}{\kappa} = -\frac{2}{\kappa} \neq 0 \,,
\end{equation*}
a contradiction. Hence, this case cannot occur. \medskip

\noindent
If $(M,g)$ is Einstein, then $\Ric^g = \frac13{s_g}g$.  Substituting $\lambda = \frac13{s_g}$ into \eqref{eq:quad} yields
\begin{equation*}
\frac13{s_g}\left(\frac{\kappa}{12}s_g + 2\right) = 0 \,.
\end{equation*}
Since $s_g\neq0$ (otherwise the soliton would be flat), we obtain $s_g = -\frac{24}\kappa$. Consequently $e^{2\phi} = -2s_g = \frac{48}\kappa$, and $\Ric^g = -\frac8\kappa\,g$, i.e., $(g, \phi)$ is hyperbolic.
\end{proof}


\section{Linearization of certain curvature operators}
\label{sec:curvatureperators}


In this section, we study the linearization of certain curvature operators that appear in the three-dimensional Heterotic soliton system, as a preliminary step toward the linearization of the Heterotic soliton system in Section \ref{sec:essential-deformations}. Let $h \in \Gamma(T^*M \otimes T^*M)$. For a given Riemannian metric $g$, the differential of a given curvature operator along $h$ at $g$ will be denoted by $\dd_g( - ) (h)$. Since $\owedge$ is linear and symmetric, the variation of the Riemann tensor is simply:
\begin{equation}
\label{eq:linearriemann}
\dd_g\cR^g(h) = - h \owedge \Ric^g - g \owedge \dd_g\Ric^g(h) + \frac{1}{4} \dd_gs_g(h) \, g \owedge g + \frac12{s_g} \,g \owedge h . 
\end{equation}
Standard computations give the following identities (see, e.g., \cite[\S1.174]{Besse} or \cite{Bunk}):
\begin{align}
\dd_g\Ric^g(h) &= \frac{1}{2} \Delta^g_L h - \nabla^{S, g} \nabla^{g \ast} h - \frac{1}{2} \nabla^g \dd \, \tr_g(h) \,, \label{eq:linearricci} \\
\dd_gs_g(h) &= \delta^g \dd (\tr_g h) + \delta^g \nabla^{g \ast} h - g(h, \Ric^g) \,, \label{eq:linearscalar}
\end{align}
where $\nabla^{g \ast}$ is the formal adjoint of $\nabla^g$, $\nabla^{S, g}$ is the symmetrization of $\nabla^g$ and 
\begin{equation*}
\Delta^g_L h = \nabla^{g \ast}\nabla^gh + h\circ_g \Ric^g + \Ric^g\circ_g h -2\cR_0(h)
\end{equation*}
is the Lichnerowicz Laplacian, with $\cR_0$ defined through
\begin{equation*}
\cR_0(h)(v_1,v_2) = g(\cR(\cdot,v_1,v_2,\cdot), h) \,.
\end{equation*}

\noindent
In the following, assume that the background metric $g$ is Einstein. These assumptions give the following standard relations:
\begin{equation}
\label{eq:riemannriccieinstein}
\cR^g = -\frac1{12}{s_g} \, g \owedge g \,, \qquad \Ric^g = \frac13{s_g} \,g \, .
\end{equation}

\begin{lemma}\label{lemma:linearcurvature}
Let $(M,g)$ be a three-dimensional Einstein manifold. Then, the linearizations of $\cR^g \circ_g \cR^g$ and $|\mathcal{R}|_g^2$ in the direction of $h \in \Gamma(T^*M \otimes T^*M)$ are given by:
\begin{align}
\dd_g(\cR^g \circ_g \cR^g)(h) &= \frac13{s_g}\, \dd_g\Ric^g(h) - \frac1{18}{s_g^2}\, h \,, \label{eq:linearriemannsquare} \\
\dd_g(|\mathcal{R}|_g^2)(h) &= \frac16{s_g}\,\dd_gs_g(h) \,, \label{eq:linearriemannnorm}
\end{align}
where $\dd_g\Ric^g(h)$ and $\dd_gs_g(h)$ are the linearizations of the Ricci tensor and the scalar curvature.
\end{lemma}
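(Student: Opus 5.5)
The plan is to avoid linearizing the Riemann tensor directly and instead linearize the algebraic identities \eqref{eq:squareriemann} and \eqref{eq:normriemann}. These hold for \emph{every} three-dimensional metric, so they may be differentiated along any curve $g_t$ with $g_0=g$ and $\dot g_0=h$. Afterwards one evaluates at the Einstein background using \eqref{eq:riemannriccieinstein}. The only subtlety is that the contractions $\Ric^g\circ_g\Ric^g$, $|\Ric^g|^2_g$ and $\tr_g$ involve the inverse metric, whose variation is $-h$ (with indices raised). Those contributions must be tracked carefully; this bookkeeping is the only real obstacle, and the rest is arithmetic.

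For the first identity I will vary each term of \eqref{eq:squareriemann} separately. In index notation $(\Ric\circ_g\Ric)_{ij}=R_{ia}g^{ab}R_{bj}$, so its variation is $\dd_g\Ric^g(h)\circ_g\Ric^g+\Ric^g\circ_g\dd_g\Ric^g(h)-R_{ia}h^{ab}R_{bj}$. At an Einstein metric this equals $\frac23 s_g\,\dd_g\Ric^g(h)-\frac19 s_g^2\,h$. The term $s_g\Ric^g$ varies to $\frac13 s_g\,\dd_gs_g(h)\,g+s_g\,\dd_g\Ric^g(h)$.

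For the scalar coefficient I use $\dd_g s_g(h)=\tr_g\dd_g\Ric^g(h)-g(h,\Ric^g)$, which at an Einstein metric reads $\tr_g\dd_g\Ric^g(h)=\dd_gs_g(h)+\frac13 s_g\tr_g h$. Then
\begin{equation*}
\dd_g|\Ric^g|^2_g(h)=2g(\dd_g\Ric^g(h),\Ric^g)-2g(h,\Ric^g\circ_g\Ric^g)=\tfrac23 s_g\,\dd_gs_g(h)+\tfrac29 s_g^2\tr_g h-\tfrac29 s_g^2\tr_g h=\tfrac23 s_g\,\dd_gs_g(h).
\end{equation*}
The variation of $\frac12 s_g^2$ is $s_g\,\dd_gs_g(h)$, so the coefficient of $g$ varies by $-\frac13 s_g\,\dd_gs_g(h)$. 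The variation of $g$ itself in the last term contributes $(|\Ric^g|^2_g-\frac12 s_g^2)h=-\frac16 s_g^2 h$.

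Summing these contributions, the $\dd_gs_g(h)\,g$ terms cancel ($+\frac13-\frac13$). The $\dd_g\Ric^g(h)$ terms combine to $(-\frac23+1)s_g=\frac13 s_g$, and the $h$ terms to $(\frac19-\frac16)s_g^2=-\frac1{18}s_g^2$. This yields \eqref{eq:linearriemannsquare}.

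For \eqref{eq:linearriemannnorm} I differentiate \eqref{eq:normriemann}, $|\cR^g|^2_g=|\Ric^g|^2_g-\frac14 s_g^2$. Using the variation of $|\Ric^g|^2_g$ just computed, this gives $\frac23 s_g\,\dd_gs_g(h)-\frac12 s_g\,\dd_gs_g(h)=\frac16 s_g\,\dd_gs_g(h)$. As a consistency check I would verify that taking half the $g$-trace of \eqref{eq:linearriemannsquare}, with the variation of $\tr_g$ included, reproduces the same answer.
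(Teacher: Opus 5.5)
Your proposal is correct and follows essentially the paper's proof. You linearize \eqref{eq:squareriemann} term by term at the Einstein background, track the inverse-metric contributions, and use $\dd_g s_g(h)=\tr_g\dd_g\Ric^g(h)-g(h,\Ric^g)$ to cancel the $g$-terms, obtaining the same coefficients; the only difference is that for \eqref{eq:linearriemannnorm} your main argument (differentiating \eqref{eq:normriemann} directly) is the paper's sanity-check remark, while the paper's main argument is the trace computation you list as your consistency check.
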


\begin{proof}
Set $3 \lambda = s_g $. From \eqref{eq:squareriemann}, we compute:
\begin{align}
\dd_g(\cR^g \circ_g \cR^g)(h) =& - \dd_g(\Ric^g \circ_g \Ric^g)(h) + \dd_g(s_g \Ric^g)(h) \nonumber\\
& + \dd_g  |\Ric^g|_g^2(h) g -  s_g  \dd_g s_g (h) g  +  ( |\Ric^g|_g^2 - \frac12{s_g^2} ) h \label{eq:lin}   \, .
\end{align}
We evaluate term-wise on the Einstein background, and we consider the linearization of $\Ric^g\in \Gamma(T^{\ast}M\otimes T^{\ast}M)$ as a symmetric two-form on $M$, rather than as an endomorphism of $TM$. Hence, for the first term we have:
\begin{equation*}
\dd_g(\Ric^g \circ_g \Ric^g)(h) = - \lambda^2 h  + 2\lambda\,\dd_g \Ric^g(h) \, .
\end{equation*}
where we have taken into account that the \emph{composition} $(-)\circ_g(-)$ of symmetric two-forms involves the (inverse) metric and must therefore be linearized, yielding the first term on the right-hand side of the previous equation. More precisely, we can write:
\begin{equation*}
(\Ric^g \circ_g \Ric^g)(v_1,v_2) = g^{\ast}(\Ric^g(v_1), \Ric^g(v_2))\, , \qquad v_1, v_2 \in \mathfrak{X}(M) \,,
\end{equation*}

\noindent
where $g^{\ast}$ denotes the metric induced by $g$ on $T^{\ast}M$. Linearizing the \emph{inverse} metric $g^{\ast}$ and evaluating on the given Einstein metric, yields the term $-\lambda^2 h$ in the equation above. For the second term, we have:
\begin{equation*}
\dd_g(s_g \Ric^g)(h) = \lambda \,\dd_gs_g(h) \,g + 3\lambda\,\dd_g\Ric^g(h) \,.
\end{equation*}
For the third term, we compute:
\begin{align*}
 \dd_g|\Ric^g|_g^2(h) &= 2 g^{\ast}(\Ric^g, \dd_g\Ric^g(h)) - 2 g^{\ast}(h, \Ric^g\circ_g \Ric^g)\\& = 2\lambda \,\tr_g[\dd_g\Ric^g(h)] - 2\lambda^2 \,\tr_gh\,,   
\end{align*}

\noindent
where we have taken into account that the norm square of $\Ric^g$ involves twice the induced metric $g^\ast$. Using this equation, the second line in Equation \eqref{eq:lin} simplifies as follows: 
\begin{align*}
 & \dd_g  |\Ric^g|_g^2(h)g -  s_g  \dd_g s_g (h) g  +  ( |\Ric^g|_g^2 - \frac12{s_g^2}) h = \\
 &= \left( 2\lambda \,\tr_g[\dd_g\Ric^g(h)] - 2\lambda^2 \,\tr_g h - 3\lambda \,\dd_gs_g(h) \right) g - \frac{3}2\lambda^2h \, .
\end{align*}

\noindent
Altogether, we obtain:
\begin{align*}
\dd_g(\cR^g \circ_g \cR^g)(h)  =    \lambda\, \dd_g\Ric^g(h) - \frac12{\lambda^2} h  + \left[2\lambda\tr_g(\dd_g\Ric^g(h)) - 2\lambda^2\tr_g h - 2\lambda \,\dd_gs_g(h) \right] \,g   \,,
\end{align*}
but, since the linearized scalar curvature satisfies
\begin{equation*}
\dd_gs_g(h) = - g(h, \Ric^g) + \tr_g(\dd_g\Ric^g(h)) = - \lambda \,\tr_gh + \tr_g(\dd_g\Ric^g(h)) \,,
\end{equation*}
the expression in brackets above cancels identically. Thus:
\begin{equation*}
\dd_g(\cR^g \circ_g \cR^g)(h) = \lambda \dd_g\Ric^g(h) - \frac12{\lambda^2} h \, .
\end{equation*}
Substituting back $\lambda$, \eqref{eq:linearriemannsquare} follows. Taking the trace of \eqref{eq:linearriemannsquare} yields
\begin{align*}
\tr_g [\dd_g(\cR^g \circ_g \cR^g)(h)] &= \lambda \tr_g[\dd_g\Ric^g(h)] - \frac12{\lambda^2}\tr_g h \\
&= \lambda \,\dd_gs_g(h) + \lambda g(h, \Ric^g(h)) - \frac12{\lambda^2} \tr_g h = \lambda \,\dd_gs_g(h) + \frac12{\lambda^2} \tr_g h \,.
\end{align*}
On the other hand, linearizing the relation $\dd_g|\cR^g|_g^2(h) = \frac{1}{2} \dd_g\tr_g (\cR^g \circ_g \cR^g)$ gives
\begin{align*}
\dd_g|\cR^g|_g^2(h) &= - \frac{1}{2} g(h, \cR^g \circ_g \cR^g) + \frac{1}{2} \tr_g [\dd_g(\cR^g \circ_g \cR^g)(h)] \\
&= - \frac14{\lambda^2} \tr_g(h) + \frac{1}{2} \tr_g [\dd_g(\cR^g \circ_g \cR^g)(h)] \,,
\end{align*}
where we have used $\cR^g \circ_g \cR^g = \frac12{\lambda^2} g$ by \eqref{eq:squareriemann}.
Combining both expression produces
\begin{equation*}
\dd_g|\cR^g|_g^2(h) = \frac12{\lambda} \,\dd_gs_g(h) \,.
\end{equation*}
Substituting back $\lambda$, \eqref{eq:linearriemannnorm} follows, completing the proof.
\end{proof}

\begin{remark}
A useful sanity check is linearizing $|\cR^g|_g^2$ directly from its alternative expression \eqref{eq:normriemann}:
\begin{align*}
\dd_g|\cR^g|_g^2(h) &= \dd_g \left( |\Ric^g|_g^2 - \frac14{s_g^2} \right)\\& = - 2 g^{\ast}(h, \Ric^g \circ_g \Ric^g) + 2g^{\ast}(\Ric^g, \dd_g\Ric^g(h)) - \frac{1}{2}s_g \,\dd_gs_g(h) \\
&= - \frac{2}{9}s_g^2 \,\tr_gh + \frac{2}{3}s_g \,\tr_g[\dd_g\Ric^g(h)] - \frac{1}{2}s_g \,\dd_gs_g(h) = \frac16{s_g} \dd_gs_g(h) \,,
\end{align*}
where we have used $\dd_g s_g(h) = - g(h, \Ric^g) + \tr_g[\dd_g\Ric^g(h)]$ and \eqref{eq:riemannriccieinstein}. This is precisely Equation \eqref{eq:linearriemannnorm}.
\end{remark}


\section{Rigidity of hyperbolic Heterotic solitons}
\label{sec:rigidity}


In this section, we study the local structure of the moduli of the torsionless Heterotic soliton system around a hyperbolic soliton, showing that they are isolated points in moduli space and therefore rigid. 


\subsection{Existence of a slice}


Let $\Conf(M)$ denote the configuration space of the torsionless Heterotic system, that is:
\begin{equation*}
\Conf(M) = \Met(M) \times C^{\infty}(M) \,.
\end{equation*}

\noindent
which we consider as a tame Fréchet manifold \cite{Hamilton}. The diffeomorphism group $\Diff(M)$ acts smoothly on the configuration space by pullback: 
\begin{equation*}
\Psi\colon \Conf(M)\times \Diff(M) \to \Conf(M)\, , \qquad ((g,\phi),u) \mapsto (u^*g, \phi\circ u) \,.
\end{equation*}

\noindent
For a fixed soliton $(g,\phi)$ we introduce the orbit map:
\begin{equation*}
\Psi_{g,\phi}\colon \Diff(M) \to \Conf(M), \qquad u \mapsto (u^*g, \phi\circ u) \,.
\end{equation*}
Its differential at the identity $e\in\Diff(M)$ can be shown to be:
\begin{equation*}
\dd_e\Psi_{g,\phi}\colon \mathfrak{X}(M) \to T_{g,\phi}\Conf(M), \qquad
v \mapsto (\mathcal{L}_v g,\, \dd\phi(v)) \,.
\end{equation*}
Thus, its formal $L^2$‑adjoint becomes:
\begin{equation*}
\dd_e\Psi_{g,\phi}^{*}\colon T_{(g,\phi)}\Conf(M) \to \mathfrak{X}(M), \qquad
(h,\xi) \mapsto  2\nabla^{g \ast} h  + \xi \dd\phi \,.
\end{equation*}  
The symbol of $\dd_e\Psi_{g,\phi}$ is injective, and hence it is overdetermined elliptic. Then, from the standard elliptic theory on closed manifolds (see for example \cite{Moroianu:2023jof}), the following $L^2$‑orthogonal decomposition follows:
\begin{equation*}
T_{g,\phi}\Conf(M) = \mathrm{Im}(\dd_e\Psi_{g,\phi}) \oplus \Ker(\dd_e\Psi_{g,\phi}^{*}),
\end{equation*}

\noindent
where both factors are closed subspaces. \medskip

\noindent 
As a direct consequence of the general theory developed by Diez and Rudolph \cite{DiezRudolph}, this action admits a slice \cite[Definition 2.2]{DiezRudolph}. To prove this, we first observe that, by a celebrated result of  \cite{Ebin}, the action of the diffeomorphism group on the Riemannian manifolds of a compact manifold admits a slice $S_g \subset \Met(M)$ through every metric $g\in \Met(M)$. The stabilizer of this action is the isometry group $\mathrm{Iso}(M,g)$ of $g$, which is well-known to be compact by the compactness of $M$. Hence, by \cite[Theorem 3.15]{DiezRudolph} the action of $\mathrm{Iso}(M,g)$ on $C^{\infty}(M)$ admits a smooth slice, which used in combination with \cite[Proposition 3.29]{DiezRudolph} proves the existence of a smooth slice for the action of $\Diff(M)$ on $\Conf(M)$ in the tame Fréchet category. In particular, we obtain the following result.

\begin{proposition}
Let $(g,\phi)\in \Conf(M)$. There exists a smooth submanifold $S_{g,\phi}\subset \Conf(M)$ and an open neighbourhood $U_{g,\phi} \subset \Conf(M)$  of $(g,\phi)$ homeomorphic to $S_{g,\phi}/\cI_{g,\phi}$, where $\cI_{g,\phi}$ denotes the stabilizer of $(g,\phi)$ in $\Diff(M)$. If $\cI_{g,\phi}$ is trivial, then $U_{g,\phi}$ is a smooth tame Fréchet manifold modeled on the tame Fréchet vector space $\Ker(\dd_e\Psi_{g,\phi}^{*}) \subset T_{g,\phi} \Conf(M)$.
\end{proposition}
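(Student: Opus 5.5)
The plan is to obtain this proposition by assembling results already quoted rather than building a slice from scratch. \emph{Step one} is the Ebin slice: Ebin's theorem gives, for the metric component, a submanifold $S_g\subset\Met(M)$ through $g$. It is invariant under $\mathrm{Iso}(M,g)$, and a neighbourhood of $g$ in $\Met(M)$ is $\Diff(M)\times_{\mathrm{Iso}(M,g)}S_g$. Concretely, $S_g$ is the image under the exponential map of a small ball in $\Ker(\nabla^{g\ast})$.

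\emph{Step two} is to handle the dilaton factor. The stabilizer $\mathrm{Iso}(M,g)$ is a compact Lie group, and it acts linearly on the tame Fréchet space $C^\infty(M)$ by $\phi\mapsto\phi\circ u$. By \cite[Theorem 3.15]{DiezRudolph}, this action admits a slice $S'_\phi$ at $\phi$, whose stabilizer is $\cI_{g,\phi}=\{u\in\mathrm{Iso}(M,g):\phi\circ u=\phi\}$. Compactness is what makes this work: averaging over the group with Haar measure gives an invariant complement to the tangent space of the orbit, which is finite-dimensional.

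\emph{Step three} combines the two slices using the "slices in stages" result \cite[Proposition 3.29]{DiezRudolph}. This yields a slice $S_{g,\phi}\subset \Conf(M)$ for the product action of $\Diff(M)$ at $(g,\phi)$, with stabilizer $\cI_{g,\phi}$. It follows that a neighbourhood of the orbit is $\Diff(M)\times_{\cI_{g,\phi}}S_{g,\phi}$. Passing to the quotient, the corresponding open set $U_{g,\phi}$ of the orbit space is homeomorphic to $S_{g,\phi}/\cI_{g,\phi}$.

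\emph{Step four} identifies the tangent space. The tangent space of $S_{g,\phi}$ at $(g,\phi)$ is a closed complement to $\mathrm{Im}(\dd_e\Psi_{g,\phi})$. To make it equal to $\Ker(\dd_e\Psi_{g,\phi}^{*})$, I would take the slice to be the image under the exponential map of a neighbourhood of $0$ in that kernel. The needed $L^2$-orthogonal splitting comes from the overdetermined ellipticity of $\dd_e\Psi_{g,\phi}$, noted above. When $\cI_{g,\phi}$ is trivial the quotient is just $S_{g,\phi}$, so $U_{g,\phi}$ is a tame Fréchet manifold modelled on $\Ker(\dd_e\Psi_{g,\phi}^*)$.

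The main obstacle is step three. One must check the hypotheses of \cite[Proposition 3.29]{DiezRudolph}: properness of the action, and tameness of the slice maps, including the Nash--Moser-type inverse function theorem needed to produce the local tube. If these are hard to verify directly, I would fall back on the following argument. The orbit map $\Psi_{g,\phi}$ has injective elliptic symbol, and its cokernel splitting is tame, so the map $\Diff(M)\times\Ker(\dd_e\Psi^*_{g,\phi})\to\Conf(M)$ is a local tame diffeomorphism modulo $\cI_{g,\phi}$. I would prove this with Ebin's original Sobolev-completion argument combined with elliptic regularity.
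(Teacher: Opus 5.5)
Your steps one to three are exactly the paper's argument. You take Ebin's slice $S_g$ with stabilizer $\mathrm{Iso}(M,g)$, then Diez--Rudolph's Theorem 3.15 for the compact group $\mathrm{Iso}(M,g)$ acting on $C^\infty(M)$, then their Proposition 3.29 to combine the two. Reading $U_{g,\phi}$ as an open set of the orbit space, rather than of $\Conf(M)$, is also the correct interpretation of the statement.

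The weak point is step four, which replaces the slice you just built by a different one that nothing you cite supports. The slice from step three is, up to shrinking, the product $S_g\times S'_\phi$. Its tangent space at $(g,\phi)$ is $\Ker(\nabla^{g\ast})\times W$, where $W$ is the chosen complement of $\{\dd\phi(v) : v \text{ Killing}\}$ in $C^\infty(M)$. This is not $\Ker(\dd_e\Psi^*_{g,\phi})=\{(h,\xi) : 2\nabla^{g\ast}h+\xi\,\dd\phi=0\}$ once $\dd\phi\neq0$: any $\xi\in W$ with $\xi\,\dd\phi\neq 0$ gives a pair $(0,\xi)$ lying in the first space but not in the second. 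So the exponential image of a ball in $\Ker(\dd_e\Psi^*_{g,\phi})$ is a \emph{different} submanifold, and neither Ebin nor Proposition 3.29 shows that it is a slice. In the tame Fréchet category, showing that an invariant transversal is a slice is precisely the inverse-function-type statement you push into your fallback, which is only sketched.

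The detour is unnecessary, because your own first sentence of step four already suffices. Both $T_{(g,\phi)}S_{g,\phi}$ and $\Ker(\dd_e\Psi^*_{g,\phi})$ are closed complements of $\mathrm{Im}(\dd_e\Psi_{g,\phi})$. Hence projection along $\mathrm{Im}(\dd_e\Psi_{g,\phi})$ restricts to an isomorphism between them. This isomorphism is tame:
\begin{itemize}
\item write $h=\mathcal{L}_vg+h_0$ with $\nabla^{g\ast}h_0=0$, which is an elliptic solve for $v$;
\item then write $\xi-\dd\phi(v)=\dd\phi(w)+\xi_0$ with $w$ Killing and $\xi_0\in W$, which is a finite-rank correction;
\item the projection is then $(h,\xi)\mapsto(h_0,\xi_0)$.
\end{itemize}
Therefore the slice from step three is already a tame Fréchet manifold modelled on $\Ker(\dd_e\Psi^*_{g,\phi})$, which is all the proposition claims. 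For constant $\phi$, the case used later for hyperbolic solitons, the two tangent spaces in fact coincide.
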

 
\noindent
Using the equations of the three-dimensional Heterotic system, we introduce the following smooth maps of Fréchet manifolds:
\begin{align*}
\cE_{\mathrm{E}}: \Conf(M) &\to \Gamma(T^*M \otimes T^*M) \,,& \qquad (g,\phi) &\mapsto \Ric^g + \nabla^g\dd\phi - \frac12{e^{2\phi}}g + \kappa \cR^g \circ_g \cR^g \,, \\
\cE_{\mathrm{YM}}: \Conf(M) &\to \Gamma(T^*M \otimes \Omega^2(M)) \,,& \qquad (g,\phi) &\mapsto \dd_{\nabla^g}^{\ast}\cR^g + \cR^g(\dd\phi) \,, \\
\cE_{\mathrm{D}}: \Conf(M) &\to C^\infty(M) \,,& \qquad (g,\phi) &\mapsto \delta^g\dd\phi + |\dd\phi|_g^2 - e^{2\phi} + \kappa|\cR^g|_g^2 \,.
\end{align*}

\noindent
We set:
\begin{equation*}
\cE = (\cE_{\mathrm{E}}, \cE_{\mathrm{YM}}, \cE_{\mathrm{D}}) \colon \Conf(M) \to \Gamma(T^{*}M\otimes T^{*}M) \times \Gamma(T^*M \otimes \Omega^2(M)) \times C^{\infty}(M) \,.
\end{equation*}
The map $\cE$ is diffeomorphism‑equivariant, that is:
\begin{equation*}
\cE(u^*g, \phi\circ u) = \left(u^*\cE_{\mathrm{E}}(g,\phi),\, u^*\cE_{\mathrm{YM}}(g,\phi) ,\, \cE_{\mathrm{D}}(g,\phi)\circ u\right) \,.
\end{equation*}
Consequently, the moduli space of three‑dimensional Heterotic solitons with vanishing torsion is defined as:
\begin{equation*}
\mathfrak{M}(M) := \frac{\cE^{-1}(0)}{\Diff(M)} \,,
\end{equation*}
equipped with the quotient topology induced by the Fréchet topology on $\Conf(M)$. Due to the existence of a slice $S_{g,\phi}$ through every $(g,\phi) \in \Conf(M)$, if $(g,\phi) \in \cE^{-1}(0)$ then $[g,\phi]\in \mathfrak{M}(M)$ has an open neighbourhood $U_{[g,\phi]}$ homeomorphic to $S_{g,\phi}\cap \cE^{-1}(0)$:
\begin{equation*}
U_{[g,\phi]} \simeq S_{g,\phi}\cap \cE^{-1}(0) \,,    
\end{equation*}

\noindent
after possibly shrinking $S_{g,\phi}$. The \emph{candidate} tangent space of $S_{g,\phi}\cap \cE^{-1}(0)$ is precisely what we call, following Koiso \cite{Koiso}, the vector space of \emph{essential deformations} $\mathbb{E}_{(g,\phi)}$ of $(g,\phi)$, that is:
\begin{equation*}
\mathbb{E}_{(g,\phi)} := \Ker(\dd_{(g,\phi)}\cE) \cap \Ker(\dd_e\Psi_{g,\phi}^{*}) .
\end{equation*}

\noindent
This, together with the existence of a local Kuranishi model by \cite[Theorem 5.3]{DiezRudolph}, provides us with the rigidity criteria that we will apply in the next subsection.

\begin{corollary}
\label{cor:rigidity}
Let $(g,\phi)$ be such that $\mathbb{E}_{(g,\phi)} = 0$. Then, there exists an open neighbourhood of $[g,\phi] \in \mathfrak{M}(M)$ that is homeomorphic to a point. In particular, $[g,\phi]$ is an isolated point in moduli space. 
\end{corollary}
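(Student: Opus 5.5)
The plan is to derive the corollary from the local Kuranishi model of Diez and Rudolph \cite[Theorem 5.3]{DiezRudolph}, combined with the slice $S_{g,\phi}$ from the preceding proposition. Fix a soliton $(g,\phi)\in\cE^{-1}(0)$. We already know that $[g,\phi]$ has a neighbourhood $U_{[g,\phi]}\subset\mathfrak{M}(M)$ homeomorphic to $(S_{g,\phi}\cap\cE^{-1}(0))/\cI_{g,\phi}$, after shrinking $S_{g,\phi}$. So it is enough to show that $(g,\phi)$ is an isolated point of $S_{g,\phi}\cap\cE^{-1}(0)$. Once this holds, the quotient by the compact stabilizer $\cI_{g,\phi}\subset \mathrm{Iso}(M,g)$ is again a single point.

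\textbf{Step 1: Kuranishi model.} The Kuranishi theorem says the following. Near $(g,\phi)$, the set $S_{g,\phi}\cap\cE^{-1}(0)$ is $\cI_{g,\phi}$-equivariantly homeomorphic to $\mathfrak{k}^{-1}(0)$. Here $\mathfrak{k}\colon V\to\mathrm{Coker}$ is a smooth equivariant map defined on a neighbourhood $V$ of $0$ in the finite-dimensional space $\Ker(\dd_{(g,\phi)}\cE)\cap \Ker(\dd_e\Psi^*_{g,\phi})=\mathbb{E}_{(g,\phi)}$, with $\mathfrak{k}(0)=0$ and $\dd_0\mathfrak{k}=0$. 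The local chart is obtained by applying the implicit function theorem to the component of $\cE|_{S_{g,\phi}}$ lying in the image of its linearization. If $\mathbb{E}_{(g,\phi)}=0$, then $V=\{0\}$, so $\mathfrak{k}^{-1}(0)=\{0\}$ and the zero set in the slice reduces to $\{(g,\phi)\}$. This gives the conclusion, with the isolatedness statement as an immediate restatement.

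\textbf{Step 2: verifying the hypotheses.} This is the step I expect to be the main obstacle. The Kuranishi theorem requires that $\cE$, restricted to the slice (equivalently, $\dd_{(g,\phi)}\cE$ together with the gauge-fixing operator $\dd_e\Psi^*_{g,\phi}$), forms an elliptic complex. Concretely, it must admit a tame Fréchet (Nash--Moser) splitting, so that $\dd_{(g,\phi)}\cE|_{\Ker \dd_e\Psi^*}$ has closed image and a finite-dimensional kernel, and the implicit function theorem applies.

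I would first argue at the level of principal symbols. By Lemma \ref{lemma:linearcurvature}, on an Einstein background the linearized Einstein component has leading part $(1+\tfrac{\kappa}{3}s_g)\,\dd_g\Ric^g(h)+\nabla^g\dd\xi$. Its coefficient is nonzero; for hyperbolic solitons it equals $-7$ by Proposition \ref{prop:classificationphiconstant}. Adding the gauge condition $2\nabla^{g*}h+\xi\,\dd\phi=0$ turns $\dd_g\Ric^g$ into $\tfrac12\Delta_L^g$ modulo lower order, exactly as in Koiso's Einstein deformation theory \cite{Koiso}. Together with the linearized dilaton equation, whose leading part is $\delta^g\dd\xi$, this gives an elliptic system in $(h,\xi)$.

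The Yang--Mills component is the delicate point, since it is of higher order. I would handle it by showing that its linearization is an overdetermined consequence of the other components, through the Bianchi identity \eqref{eq:bianchi} and Corollary \ref{cor:ricdphi}. It can then be adjoined without destroying the Fredholm property: the relevant operator becomes overdetermined elliptic, with injective symbol, so the closed-range decomposition invoked earlier still applies. If this route fails, the fallback is to verify directly the abstract Fredholm-type hypotheses of \cite[Theorem 5.3]{DiezRudolph} for the full map $\cE$. Once the hypotheses are in place, Step 1 completes the argument.
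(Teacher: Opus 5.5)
Your Step 1 is the paper's argument. The paper gives no separate proof: it obtains the corollary from the Diez--Rudolph slice together with the local Kuranishi model \cite[Theorem 5.3]{DiezRudolph}, so that $\mathbb{E}_{(g,\phi)}=0$ forces $S_{g,\phi}\cap\cE^{-1}(0)$ to reduce locally to $\{(g,\phi)\}$; your passage to the quotient by $\cI_{g,\phi}$ is, if anything, more careful than the paper's identification $U_{[g,\phi]}\simeq S_{g,\phi}\cap\cE^{-1}(0)$.

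The paper never verifies the hypotheses of the Kuranishi theorem, so your Step 2 goes beyond it. If you keep Step 2, note the following:
\begin{itemize}
\item It only applies at Einstein backgrounds, since it relies on Lemma \ref{lemma:linearcurvature}, whereas the corollary is stated for arbitrary $(g,\phi)$.
\item In divergence-free gauge, $\dd_g\Ric^g(h)$ still contains the second-order term $-\tfrac12\nabla^g\dd\,\tr_g h$, so it is not $\tfrac12\Delta_L^g$ modulo lower order.
\item The linearized dilaton equation also contains $\tfrac{\kappa s_g}{6}\,\dd_g s_g(h)$, which is second order in $h$, so its leading part is not just $\delta^g\dd\xi$. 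Injectivity must therefore be checked on the coupled symbol in $(h,\xi)$; it does hold at a hyperbolic soliton.
\item The Yang--Mills row need not be shown redundant, because adjoining rows cannot destroy injectivity of the symbol. Deriving its redundancy from Corollary \ref{cor:ricdphi} would in any case be circular, since that corollary is itself derived from the Yang--Mills equation.
\end{itemize}
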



\subsection{Essential deformations}
\label{sec:essential-deformations}


Let $(g,\phi)$ be a three‑dimensional torsionless Heterotic soliton with constant dilaton. By Proposition~\ref{prop:classificationphiconstant}, it is hyperbolic Einstein:
\begin{equation}
\label{eq:backg}
\Ric^{g} = \frac13s_g g \,, \qquad \kappa s_g = - 24 \,, \qquad \kappa e^{2\phi} = 48 \,.
\end{equation}
The existence of a slice implies that the tangent space to the moduli space at $[g,\phi]$ is contained in the intersection of the kernel of the linearized equations with the slice. \medskip
 
\noindent
Let $(h,\xi) \in \mathbb{E}_{(g,\phi)}$. We proceed by linearizing the scalar identity: \begin{equation}\label{eq:scalaridentity}
s_g + |\dd\phi|_g^2 - \frac{5}{2}e^{2\phi} + 3\kappa|\cR^g|_g^2 = 0 \,,
\end{equation}
which follows from adding the dilaton equation \eqref{eq:dilaton} to the trace of the Einstein equation \eqref{eq:einstein}.
Using that $\dd\phi=0$ on the given hyperbolic background $(g,\phi)$ together with Equation \eqref{eq:linearriemannnorm} yields:
\begin{equation}\label{eq:linearscalaridentity}
0 = \dd_gs_g(h) - 5e^{2\phi}\xi + 3\kappa\,\dd_g (\vert \cR^g\vert_g^2)(h) = (1+\frac{\kappa}{2}s_g) \dd_gs_g(h) - 5e^{2\phi}\xi\, .
\end{equation}
Taking the exterior derivative of this equation, we obtain:
\begin{equation*}
(1+\frac{\kappa}{2} s_g)\dd[\dd_gs_g(h)] = 5e^{2\phi}\,\dd\xi   \, .
\end{equation*}
On the other hand, linearizing Equation \eqref{eq:ricdphi} at $(g,\phi)$ gives:
\begin{equation*}
 \dd[\dd_gs_g(h)]=\frac{2}{3}s_g\,\dd\xi \, ,     
\end{equation*}

\noindent
which substituted back into the previous equation yields:
\begin{equation*}
0 = (  (1+\frac{\kappa}{2}s_g) \frac{2}{3}s_g - 5e^{2\phi}) \dd\xi\, ,    
\end{equation*}

\noindent
Substituting now the background values $\kappa s_g=-24$, $\kappa e^{2\phi}=48$ in this equation, we obtain:
\begin{equation*}
- \frac{64}{\kappa} \,\dd\xi=0  \, ,
\end{equation*}

\noindent
and hence $\xi$ is constant. We prove next that $\xi=0$.

\begin{proposition}\label{prop:dilatonrigid}
Let $(g,\phi)$ be a hyperbolic Heterotic soliton with vanishing torsion. For every $(h,\xi)\in\mathbb{E}_{(g,\phi)}$ we have $\xi=0$ and $\dd_g s_g(h)=0$.
\end{proposition}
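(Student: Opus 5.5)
The plan is to pair the already-linearized scalar identity \eqref{eq:linearscalaridentity} with the linearization of the reduced dilaton equation \eqref{eq:dilaton2}. Together these give two independent linear relations between the constant $\xi$ and the function $\dd_g s_g(h)$, and the background values \eqref{eq:backg} should make the resulting $2\times 2$ system nondegenerate. From the discussion just before the statement we already know that $\xi$ is constant.

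\textbf{Step 1.} Insert $\kappa s_g=-24$ into \eqref{eq:linearscalaridentity}. Since $1+\frac{\kappa}{2}s_g=-11$, this gives
\begin{equation*}
-11\,\dd_g s_g(h) = 5e^{2\phi}\xi \,.
\end{equation*}
In particular $\dd_g s_g(h)$ is a constant multiple of $\xi$.

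\textbf{Step 2.} Linearize \eqref{eq:dilaton2}, namely $s_g - 3\delta^g\dd\phi - 2|\dd\phi|_g^2 + \frac12 e^{2\phi}=0$, at the hyperbolic background in the direction $(h,\xi)$. The only point requiring care is the two terms involving $\dd\phi$, and I expect this to be the (mild) main obstacle.
\begin{itemize}
\item The term $|\dd\phi|^2_g$ is quadratic in $\dd\phi$, so its variation is $4g(\dd\phi,\dd\xi)$ plus a term in $h$ multiplied by $\dd\phi\otimes\dd\phi$. Both vanish because $\dd\phi=0$.
\item The operator $\delta^g\dd\phi$ is linear in $\dd\phi$. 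Hence its variation in the metric direction, $(\dd_g\delta^g)(h)\,\dd\phi$, vanishes since $\dd\phi=0$, and the remaining contribution is $\delta^g\dd\xi$.
\end{itemize}
Since $\xi$ is constant, $\delta^g\dd\xi=0$. The linearized equation therefore reads
\begin{equation*}
\dd_g s_g(h) + e^{2\phi}\xi = 0 \,.
\end{equation*}

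\textbf{Step 3.} Combine the two relations. Substituting $\dd_g s_g(h)=-e^{2\phi}\xi$ from Step 2 into Step 1 gives
\begin{equation*}
11 e^{2\phi}\xi = 5e^{2\phi}\xi \,,
\end{equation*}
that is, $6e^{2\phi}\xi=0$. Hence $\xi=0$, and then Step 2 gives $\dd_g s_g(h)=0$, as claimed.

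If one prefers to avoid \eqref{eq:dilaton2}, the same argument can be run with the original dilaton equation \eqref{eq:dilaton}. Its linearization is $-2e^{2\phi}\xi + \kappa\,\dd_g(|\cR^g|^2_g)(h)=0$. Using \eqref{eq:linearriemannnorm}, this becomes $-2e^{2\phi}\xi + \frac{\kappa}{6}s_g\,\dd_g s_g(h)=0$, i.e.\ $\dd_g s_g(h)=-\frac{1}{2}e^{2\phi}\xi$ after inserting $\kappa s_g=-24$. Compared with Step 1, this again forces $\xi=0$. The two routes cross-check the arithmetic.
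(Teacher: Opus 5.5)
Your proof is correct and follows essentially the paper's approach. Both arguments linearize two independent scalar consequences of the system at the hyperbolic background, using $\dd\phi=0$ and the constancy of $\xi$, and solve the resulting $2\times 2$ linear system in $\xi$ and $\dd_g s_g(h)$. The paper uses the dilaton equation and the trace of the Einstein equation directly, while your pair (the scalar identity \eqref{eq:linearscalaridentity} and the linearization of \eqref{eq:dilaton2}) is their sum and the combination ``trace minus twice dilaton'' — just a change of basis, with the minor convenience that \eqref{eq:dilaton2} has no curvature-squared term.
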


\begin{proof}
Since $\xi$ is constant, the linearization of the dilaton equation \eqref{eq:dilaton} at $(g,\phi)$ reads:
\begin{equation*}
-2e^{2\phi}\xi + \kappa\,\dd_g(|\cR^g|_g^2)(h)=0.
\end{equation*}
Using Lemma~\ref{lemma:linearcurvature} together with the background values $\kappa s_g=-24$, $\kappa e^{2\phi}=48$, this equation reduces to:
\begin{equation*}
-\frac{96}{\kappa}\,\xi + \kappa\cdot\frac1{6}\cdot\frac{-24}\kappa\dd_g s_g(h)=0 \, ,
\end{equation*}
whence $\kappa\,\dd_g s_g(h) = -24\,\xi$. On the other hand, linearizing the trace of the Einstein equation \eqref{eq:einstein} yields:
\begin{equation*}
\dd_g s_g(h) - 3e^{2\phi}\xi + 2\kappa\cdot\frac16{s_g}\,\dd_g s_g(h)=0.
\end{equation*}
Substituting the same background values and the expression for $\dd_g s_g(h)$ gives
\begin{equation*}
0=-7\left(-\frac{24}{\kappa}\,\xi\right) - \frac{144}{\kappa}\,\xi = \frac{24}{\kappa}\,\xi \,,
\end{equation*}
hence $\xi=0$ and consequently $\dd_g s_g(h)=0$.
\end{proof}

\noindent
Next, we show that essential deformations are automatically transverse‑traceless.

\begin{lemma}\label{lemma:traceessential}
Let $(g,\phi)$ be a three‑dimensional hyperbolic Heterotic soliton. For every $(h,\xi)\in\mathbb{E}_{(g,\phi)}$ we have $\tr_g h = 0$.
\end{lemma}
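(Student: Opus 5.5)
The plan is to combine the slice condition with the linearized scalar curvature formula \eqref{eq:linearscalar}. Proposition \ref{prop:dilatonrigid} gives $\xi=0$ and $\dd_g s_g(h)=0$. These two facts should turn $\tr_g h$ into an eigenfunction of the Laplacian with a negative eigenvalue, which is impossible on a compact manifold unless the function vanishes.

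\textbf{Step 1: $h$ is divergence-free.} Since $(h,\xi)\in\mathbb{E}_{(g,\phi)}\subset \Ker(\dd_e\Psi_{g,\phi}^{*})$, we have $2\nabla^{g*}h + \xi\,\dd\phi = 0$. On the hyperbolic background $\dd\phi=0$, and in any case $\xi=0$ by Proposition \ref{prop:dilatonrigid}. Hence $\nabla^{g*}h=0$.

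\textbf{Step 2: the scalar equation for $f:=\tr_g h$.} Insert $\nabla^{g*}h=0$ into \eqref{eq:linearscalar}. The background is Einstein with $\Ric^g=\frac13 s_g\, g$, so $g(h,\Ric^g)=\frac13 s_g f$. This gives
\begin{equation*}
0=\dd_g s_g(h) = \delta^g\dd f - \tfrac13 s_g\, f = \delta^g \dd f + \tfrac{8}{\kappa}\, f ,
\end{equation*}
where we used $\kappa s_g=-24$ from \eqref{eq:backg}.

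\textbf{Step 3: a sign argument.} From $\kappa e^{2\phi}=48$ we get $\kappa>0$. Multiply the equation of Step 2 by $f$ and integrate over the compact manifold $M$:
\begin{equation*}
\int_M |\dd f|_g^2 + \frac{8}{\kappa}\int_M f^2 = 0 .
\end{equation*}
Both terms are nonnegative, so $f=0$, that is, $\tr_g h=0$.

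\textbf{Main point to check.} The only delicate step is the sign and normalization conventions in \eqref{eq:linearscalar} and for $\nabla^{g*}$, so that the divergence term really drops out. The sign of $-g(h,\Ric^g)$ is also essential. With $s_g<0$ it is what produces the positive zeroth-order term, so this sign needs to be confirmed against \cite{Besse}.
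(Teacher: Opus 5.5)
Your proposal is correct and follows the same route as the paper. The slice condition gives $\nabla^{g*}h=0$, and Proposition~\ref{prop:dilatonrigid} gives $\dd_g s_g(h)=0$; integrating $\tr_g h\,\bigl(\delta^g\dd\,\tr_g h-\tfrac13 s_g\,\tr_g h\bigr)=0$ with $s_g<0$ then forces $\tr_g h=0$. Your explicit remarks that $\dd\phi=0$ removes the $\xi$-term from the slice condition and that $\kappa e^{2\phi}=48$ forces $\kappa>0$ (so $s_g<0$) fill in points the paper leaves implicit.
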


\begin{proof}
Since $\nabla^{g \ast} h=0$ by the slice condition, the linearization \eqref{eq:linearscalar} of the scalar curvature on an Einstein manifold simplifies to
\begin{equation*}
\dd_g s_g(h) = \delta^g\dd \,(\tr_g h) - \frac13s_g \,\tr_g h,
\end{equation*}
Proposition \ref{prop:dilatonrigid} gives $\dd_g s_g(h)=0$, hence
\begin{equation*}
\delta^g\dd \,(\tr_g h) - \frac13s_g \,\tr_g h = 0 \,.
\end{equation*}
Multiplying by $\tr_g h$ and integrating over $M$ yields
\begin{equation*}
0 = \int_M \left( \tr_g h \,\delta^g\dd \,(\tr_g h) - \frac13s_g\,(\tr_g h)^2 \right) \,\nu_g = \int_M \left( |\dd (\tr_g h)|_g^2 - \frac13s_g\,(\tr_g h)^2 \right) \,\nu_g  \,.
\end{equation*}
Since $s_g < 0$, the two terms in the integrand are non-negative, thus both are necessarily zero pointwise. In particular, $-s_g\,(\tr_g h)^2 = 0$, which implies $\tr_g h = 0$.
\end{proof}

\noindent
We now characterize $h$ as an infinitesimal Einstein deformation.

\begin{lemma}\label{lemma:einsteindef}
Let $(g,\phi)$ be a three‑dimensional hyperbolic Heterotic soliton. For every $(h,\xi)\in\mathbb{E}_{(g,\phi)}$, $h$ is an infinitesimal Einstein deformation.
\end{lemma}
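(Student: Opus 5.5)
We need to show that $h$ satisfies the linearized Einstein equation $\dd_g\Ric^g(h) = \frac13 s_g\, h$, with $s_g$ held fixed since $\dd_g s_g(h)=0$. By Proposition~\ref{prop:dilatonrigid} and Lemma~\ref{lemma:traceessential}, together with the slice condition, we already know that $\xi=0$, $\tr_g h=0$ and $\nabla^{g\ast}h=0$. The plan is to linearize the Einstein equation \eqref{eq:einstein}, i.e.\ the map $\cE_{\mathrm{E}}$, at the hyperbolic background \eqref{eq:backg} in the direction $(h,0)$, and to read off this identity directly.

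Because $\dd\phi=0$ and $\xi=0$, the term $\nabla^g\dd\phi$ contributes nothing at first order. Indeed, its variation is $\nabla^g\dd\xi$ minus the variation of the Christoffel symbols contracted with $\dd\phi$, and both pieces vanish. Differentiating $-\frac12 e^{2\phi}g$ gives $-\frac12 e^{2\phi}h - e^{2\phi}\xi\, g = -\frac12 e^{2\phi}h$. For the quadratic curvature term we use \eqref{eq:linearriemannsquare} from Lemma~\ref{lemma:linearcurvature}. The linearized Einstein equation then reads
\begin{equation*}
\dd_g\Ric^g(h) - \tfrac12 e^{2\phi}h + \kappa\Bigl(\tfrac13 s_g\,\dd_g\Ric^g(h) - \tfrac1{18}s_g^2\,h\Bigr)=0 .
\end{equation*}
Substituting $\kappa s_g=-24$ and $\kappa e^{2\phi}=48$ gives $(1-8)\,\dd_g\Ric^g(h) + \bigl(-\frac{24}{\kappa} + \frac{24\cdot 24}{18\kappa}\cdot\frac{1}{1}\bigr)h$. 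Here $\frac{\kappa s_g^2}{18} = \frac{576}{18\kappa} = \frac{32}{\kappa}$, so the coefficient of $h$ is $-\frac{24}{\kappa}-\frac{32}{\kappa} = -\frac{56}{\kappa}$. We therefore get $-7\,\dd_g\Ric^g(h) - \frac{56}{\kappa}h=0$, that is, $\dd_g\Ric^g(h) = -\frac{8}{\kappa}h = \frac13 s_g\, h$. This is exactly the linearized Einstein condition $\dd_g(\Ric^g - \frac13 s_g g)(h)=0$, taking into account $\dd_g s_g(h)=0$.

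Finally, since $h$ is trace-free and divergence-free, formula \eqref{eq:linearricci} reduces to $\dd_g\Ric^g(h)=\frac12\Delta_L^g h$. The statement can therefore be rephrased as $\Delta^g_L h = \frac23 s_g\, h$ with $\tr_g h=0$ and $\nabla^{g\ast}h=0$. This is Koiso's characterization of infinitesimal Einstein deformations, which is the form needed to invoke Mostow-type or Koiso rigidity afterwards.

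The main obstacle is bookkeeping rather than a conceptual difficulty. One must check that the linearization of $\nabla^g\dd\phi$ really vanishes, which requires both $\dd\phi=0$ and $\xi$ constant, indeed zero. One must also check that $\cR^g\circ_g\cR^g$ is linearized as a covariant $2$-tensor, consistent with Lemma~\ref{lemma:linearcurvature}. The numerical coefficients must be tracked carefully, because a sign error would turn the conclusion into a different eigenvalue condition.
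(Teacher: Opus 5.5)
Your proposal is correct and follows essentially the same route as the paper: you linearize \eqref{eq:einstein} with $\xi=0$ using \eqref{eq:linearriemannsquare}, obtain $-7\,\dd_g\Ric^g(h)-\frac{56}{\kappa}h=0$, i.e.\ $\dd_g\Ric^g(h)=\frac13 s_g h$, and then use the transverse-traceless condition to rewrite this as $\Delta_L h=\frac23 s_g h$ (equivalently $\nabla^{g\ast}\nabla^g h-2\cR_0(h)=0$), which is the Berger--Ebin/Koiso characterization of infinitesimal Einstein deformations. The only blemish is a sign typo in your intermediate expression, where the $h$-coefficient contains $+\frac{24\cdot 24}{18\kappa}$ instead of $-\frac{24\cdot 24}{18\kappa}$; your very next sentence uses the correct value $-\frac{56}{\kappa}$.
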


\begin{proof}
From Proposition~\ref{prop:dilatonrigid} and Lemma~\ref{lemma:traceessential} we know $\xi=0$, $\tr_g h=0$ and $\nabla^{g \ast} h=0$. Then, linearizing \eqref{eq:einstein} and using \eqref{eq:linearriemannsquare} yields
\begin{align*}
0 &= \dd_g\Ric^g(h) - \frac12{e^{2\phi}}h + \kappa\, \dd_g(\cR^g \circ_g\cR^g) \\
&= (1 + \frac\kappa3{s_g}) \,\dd_g\Ric^g(h) - (\frac12{e^{2\phi}} + \frac\kappa{18}{s_g^2}) \,h\,.
\end{align*}
Now, taking \eqref{eq:backg} into account, we obtain
\begin{equation*}
0 = -7\dd_g\Ric^g(h) - (\frac{24}\kappa + \frac{24^2}{18\kappa}) \,h = -7\dd_g\Ric^g(h) - \frac{56}\kappa \,h = -7\dd_g\Ric^g(h) + \frac73s_g h \,,
\end{equation*}
whence
\begin{equation}\label{eq:linearizedricci}
\dd_g\Ric^g(h) = \frac13s_g h \,.
\end{equation}

\noindent
After all these substitutions, using \eqref{eq:linearriemannsquare} and \eqref{eq:backg}, the linearized Einstein equation \eqref{eq:einstein} reads after simplification:
\begin{equation*}
\dd_g\Ric^g(h) = \frac13s_g h \,.
\end{equation*}
For a transverse‑traceless tensor on an Einstein manifold, \eqref{eq:linearricci} gives
\begin{equation}\label{eq:linearizedricci2}
\dd_g\Ric^g(h) = \frac{1}{2}\Delta_L h = \frac{1}{2}\left(\nabla^{g \ast}\nabla^gh + \frac23s_gh -2\cR_0(h)\right) \,,
\end{equation}
where we have used $h\circ_g \Ric^g + \Ric^g\circ_g h = \frac23s_gh$.
From \eqref{eq:linearizedricci} and \eqref{eq:linearizedricci2} we thus obtain
\begin{equation}\label{eq:linearizedeinsteinstructure}
\nabla^{g \ast}\nabla^gh - 2\cR_0(h) = 0 \,.
\end{equation}

\noindent
By \cite{BergerEbin} (see also \cite[\S12.30]{Besse}), \eqref{eq:linearizedeinsteinstructure} is equivalent to $h$ being an infinitesimal Einstein deformation.
\end{proof}

We are now ready to prove our first main result.

\begin{theorem}\label{thm:rigidityessential}
Let $(g,\phi)$ be a non-flat three‑dimensional compact Heterotic soliton with vanishing torsion and constant dilaton. Then, the vector space $\mathbb{E}_{(g,\phi)}$ of essential deformations of $(g,\phi)$ is zero-dimensional.
\end{theorem}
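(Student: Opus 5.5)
By Proposition \ref{prop:classificationphiconstant}, a non-flat compact torsionless soliton with constant dilaton is hyperbolic and satisfies \eqref{eq:backg}. So all the results of Section \ref{sec:essential-deformations} apply. Fix $(h,\xi)\in\mathbb{E}_{(g,\phi)}$. Proposition \ref{prop:dilatonrigid} gives $\xi=0$. Lemma \ref{lemma:traceessential} together with the slice condition $2\nabla^{g\ast}h+\xi\,\dd\phi=0$, which reduces to $\nabla^{g\ast}h=0$ since $\dd\phi=0$, shows that $h$ is transverse-traceless. Lemma \ref{lemma:einsteindef} shows that $h$ satisfies \eqref{eq:linearizedeinsteinstructure}, i.e.\ $\nabla^{g\ast}\nabla^g h-2\cR_0(h)=0$. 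It therefore remains only to show that a transverse-traceless solution of this equation on a compact hyperbolic three-manifold vanishes. This is the three-dimensional analogue of Koiso's rigidity of hyperbolic Einstein metrics (infinitesimal Mostow rigidity).

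I would prove this by a direct Bochner argument, avoiding any citation. On the background $\cR^g=-\frac{1}{12}s_g\,g\owedge g$, so the sectional curvature is the constant $c=s_g/6<0$. For a traceless symmetric $h$ one computes $\cR_0(h)=-c\,h$ (up to the sign convention of $\cR_0$). The sign can be fixed by consistency with $\Delta_L$ on an Einstein manifold, where $\Delta_L h=\nabla^{g\ast}\nabla^g h+2(n-1)c\,h-2\cR_0(h)$ for traceless $h$, and $\cR_0(h)=-c\,h$ in the usual form $\mathring{R}h=c(\tr h\,g-h)$. Equation \eqref{eq:linearizedeinsteinstructure} then becomes $\nabla^{g\ast}\nabla^g h+2c\,h=0$. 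Pairing with $h$ and integrating gives $\|\nabla h\|^2=-2c\|h\|^2=2|c|\|h\|^2$, which has no sign contradiction by itself. So the rough Laplacian alone is not enough, and I need a sharper estimate that uses $\nabla^{g\ast}h=0$.

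This is the step I expect to be the main obstacle. The tool I would try first is the Codazzi-type Weitzenböck identity. View $h$ as a $T^*M$-valued $1$-form and consider $\dd_{\nabla^g}h$ and its formal adjoint, the divergence. One has
\begin{equation*}
\dd_{\nabla^g}^{\ast}\dd_{\nabla^g}h+\dd_{\nabla^g}\dd_{\nabla^g}^{\ast}h=\nabla^{g\ast}\nabla^g h+\text{(curvature terms)}.
\end{equation*}
For traceless $h$ in constant curvature $c$ in dimension $n$, the curvature terms equal $nc\,h$, so in dimension three this is $3c\,h$. Since $\nabla^{g\ast}h=0$, integrating yields
\begin{equation*}
\|\dd_{\nabla^g}h\|^2=\|\nabla h\|^2+3c\|h\|^2=2|c|\|h\|^2-3|c|\|h\|^2=-|c|\|h\|^2.
\end{equation*}
The left side is non-negative and $c<0$, which forces $h=0$. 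If the constants come out differently, the fallback is the standard $L^2$ refinement $\|\nabla h\|^2\ge\frac{n}{n+1}\cdot(\ldots)$ via the Kato/twistor decomposition of $\nabla h$ for TT tensors. Alternatively, I would cite Koiso's theorem (\cite{Koiso}, \cite[\S12.73]{Besse}) that compact hyperbolic manifolds of dimension $n\ge3$ have no infinitesimal Einstein deformations. In either case $h=0$ and $\xi=0$, so $\mathbb{E}_{(g,\phi)}=0$.
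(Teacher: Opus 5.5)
Your proposal is correct and follows the paper's proof step by step: reduce to the hyperbolic case, get $\xi=0$, show $h$ is transverse-traceless, identify $h$ as an infinitesimal Einstein deformation, and conclude that none exist on a compact hyperbolic three-manifold. The only difference is that where the paper cites Koiso, you reprove his result with the Weitzenb\"ock formula for $\dd_{\nabla^g}$ on $T^*M$-valued $1$-forms, which is exactly the standard argument in Besse, \S12.71--12.73. Your constants check out: for traceless $h$ the curvature term is $h\circ_g\Ric^g-\cR_0(h)=3c\,h$, so $\|\dd_{\nabla^g}h\|^2=2|c|\,\|h\|^2-3|c|\,\|h\|^2=-|c|\,\|h\|^2$, and the Kato/twistor fallback you mention is not needed.
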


\begin{proof}
By \eqref{prop:classificationphiconstant}, the metric $g$ is hyperbolic. Let $(h,\xi)\in \mathbb{E}_{(g,\phi)}$ be an essential deformation of the hyperbolic soliton $(g,\phi)$. By Proposition \ref{prop:dilatonrigid}, we have $\xi = 0$. Furthermore, by Lemma \ref{lemma:einsteindef}, $h$ is an essential infinitesimal Einstein deformation of the hyperbolic metric $g$. Then, by a classical result of Koiso \cite{KoisoII} (see also \cite[\S12.73]{Besse}), it follows that $h=0$, and thus $\mathbb{E}_{(g,\phi)} = 0$.
\end{proof} 


\section{Heterotic solitons with harmonic curvature}
\label{sec:harmonic}


In this section, we prove that if the Riemann curvature tensor of a three‑dimensional compact Heterotic soliton with vanishing torsion is divergence‑free, then the dilaton is necessarily constant. Combined with Proposition \ref{prop:classificationphiconstant}, this shows that the only non‑flat solutions in this class are the hyperbolic ones and adds a layer of difficulty to the construction of compact Heterotic solitons with non-constant dilaton.  

\begin{definition}
A Riemannian manifold $(M,g)$ has \emph{harmonic curvature} if its Riemann tensor is divergence‑free, i.e. $\dd_{\nabla^g}^{\ast} \cR^g = 0$.
\end{definition}

\noindent
In three dimensions, the harmonicity of the Riemann tensor is equivalent to the Ricci tensor being Codazzi \eqref{eq:bianchi}, which, in turn, is equivalent to $(M,g)$ being locally conformally flat with constant scalar curvature. We leave open the existence of locally conformally flat torsionless Heterotic solitons with non-constant dilaton, and therefore non-constant scalar curvature. 

\begin{theorem}\label{thm:harmonic}
Let $M$ be a closed three-dimensional manifold and $(g,\phi)$ a non-flat Heterotic soliton with vanishing torsion. If $g$ has harmonic curvature, then $(g,\phi)$ is hyperbolic.
\end{theorem}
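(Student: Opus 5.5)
The plan is to show that harmonic curvature forces $\dd\phi=0$ by a pointwise argument on the open set where $\dd\phi\neq0$, and then to conclude with Proposition \ref{prop:classificationphiconstant}.

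\emph{Consequences of harmonic curvature.} Since $\dd_{\nabla^g}^{\ast}\cR^g=0$, the Yang--Mills equation \eqref{eq:yangmills} reduces to $\cR^g(\dd\phi)=0$, that is, $\cR^g_{X,v}=0$ for every $v$, where $X$ is the vector field dual to $\dd\phi$. By \eqref{eq:bianchi}, $\Ric^g$ is Codazzi, so the contracted Bianchi identity makes $s_g$ constant. Corollary \ref{cor:ricdphi} then gives $\Ric^g(X)=\frac12\dd s_g=0$.

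\emph{Ricci tensor on $U$.} Let $U=\{X\neq0\}$ and argue by contradiction, assuming $U\neq\emptyset$. Plugging $v_1=X$ and $\Ric^g(X)=0$ into \eqref{eq:riemann2} gives
\begin{equation*}
0=\cR^g_{X,v}=X\wedge\Big(\tfrac12 s_g\, v-\Ric^g(v)\Big)\qquad\text{for all } v .
\end{equation*}
Hence $\Ric^g-\frac12 s_g\,\mathrm{Id}$ takes values in the line spanned by $X$. Together with $\Ric^g(X)=0$, this shows that on $U$ the Ricci tensor has eigenvalue $0$ on $X$ and eigenvalue $\frac12 s_g$ on $X^\perp$. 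I expect to need only the $X$-direction of this.

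\emph{Two formulas for $\nabla^g\dd\phi(X)$.} Evaluate the modified Einstein equation \eqref{eq:einstein2} on $X$. Since $\Ric^g(X)=0$, the terms $-\kappa\Ric^g\circ_g\Ric^g$ and $(1+\kappa s_g)\Ric^g$ drop out, leaving
\begin{equation*}
\nabla^g_X\dd\phi=-\tfrac13\Big(-s_g-\tfrac{3\kappa}{4}s_g^2-|\dd\phi|^2_g+e^{2\phi}\Big)X .
\end{equation*}
Next I use the scalar identity \eqref{eq:scalaridentity}. By \eqref{eq:normriemann} and the eigenvalues just found, $|\cR^g|^2_g=|\Ric^g|^2_g-\frac14 s_g^2=\frac14 s_g^2$, which is constant. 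So \eqref{eq:scalaridentity} reads
\begin{equation*}
|\dd\phi|^2_g=\tfrac52e^{2\phi}-s_g-\tfrac{3\kappa}{4}s_g^2 \quad\text{on } U.
\end{equation*}
Differentiating gives $2\nabla^g_X\dd\phi=5e^{2\phi}X$. Substituting the same expression for $|\dd\phi|^2_g$ into the first formula, the constants $s_g$ and $\frac{3\kappa}{4}s_g^2$ cancel, and it becomes $\nabla^g_X\dd\phi=\frac12e^{2\phi}X$.

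\emph{Conclusion.} Comparing the two formulas gives $\frac52e^{2\phi}X=\frac12e^{2\phi}X$, which is impossible on $U$. Thus $U=\emptyset$, so $\phi$ is constant, and Proposition \ref{prop:classificationphiconstant} shows that $(g,\phi)$ is hyperbolic.

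The main obstacle is the second step: extracting the eigenstructure of $\Ric^g$ from $\cR^g(\dd\phi)=0$ with the paper's sign and identification conventions. The final contradiction relies on the exact coefficients in \eqref{eq:einstein2} and \eqref{eq:scalaridentity}, so the arithmetic must be checked carefully. If a coefficient came out differently, the fallback is to also use the $X^\perp$ components of \eqref{eq:einstein2} together with \eqref{eq:dilaton2}.
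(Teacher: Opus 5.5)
Your proof is correct. It agrees with the paper up to the identity $|\dd\phi|_g^2-\frac52e^{2\phi}=-s_g-\frac{3\kappa}{4}s_g^2$ on $U$, and differs only in the final step. The shared part is: constancy of $s_g$, $\Ric^g(\dd\phi)=0$, the reduction to $\cR^g(\dd\phi)=0$, the eigenstructure $\Ric^g=\frac12 s_g(g-u\otimes u)$, and the scalar identity \eqref{eq:scalaridentity}.

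The paper then argues globally. The function $f=|\dd\phi|_g^2-\frac52e^{2\phi}$ is constant on $U$ and locally constant on the interior of $M\setminus U$, so it is constant on $M$ by a density argument (using continuity of $\dd f$). Comparing the values of $f$ at a maximum and a minimum of $\phi$ then forces $\phi$ to be constant.

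You instead differentiate the identity on $U$ to get $\nabla^g_X\dd\phi=\frac52e^{2\phi}X$. You compare this with the $X$-component of the Einstein equation, which gives $\nabla^g_X\dd\phi=\frac12e^{2\phi}X$. I checked the coefficients in both formulas and they are right.

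Your route uses one more piece of the system, the Einstein equation in the $\dd\phi$ direction, which the paper never needs. In exchange it gives a purely pointwise contradiction, with no extremum argument, no density step, and no compactness until the final appeal to Proposition \ref{prop:classificationphiconstant}. So it shows that harmonic curvature forces $\dd\phi\equiv0$ even locally. The paper's route uses less of the system but relies on compactness to produce the extrema of $\phi$.

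On your worry about coefficients: you can bypass \eqref{eq:einstein2} entirely. Since $X\lrcorner\cR^g=0$ kills $\cR^g\circ_g\cR^g(X,\cdot)$ and $\Ric^g(X)=0$, the $X$-component of \eqref{eq:einstein} reads directly $\nabla^g_X\dd\phi=\frac12e^{2\phi}X$.
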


\begin{proof}
Assume that $(g,\phi)$ satisfies the system \eqref{eq:einstein}, \eqref{eq:yangmills}, and \eqref{eq:dilaton} on a compact oriented three‑manifold $M$. By hypothesis $\dd_{\nabla^g}^{\ast} \cR^g = 0$, whence the scalar curvature $s_g$ is constant. Then, from Corollary~\ref{cor:ricdphi} we have:
\begin{equation}
\label{eq:ricdphi0}
\Ric^g(\dd\phi) = 0 \,.
\end{equation}

\noindent
Let $U := \{p\in M\mid \dd\phi_p\neq0\}$ be the open subset of $M$ where the gradient of $\phi$ is non-vanishing. Assume $U$ is non-empty. On $U$ define the unit 1-form:
\begin{equation*}
 u := \frac{1}{\vert\dd\phi\vert_g} \dd\phi \,.  
\end{equation*}
From \eqref{eq:ricdphi0}, $\Ric^g$ has eigenvalue $0$ in the direction of $\dd\phi$ on $U$. \medskip

\noindent
On the other hand, since the harmonicity of $\cR^g$ is equivalent to $\dd^{\nabla^g} \Ric^g = 0$, the Yang–Mills equation \eqref{eq:yangmills} reduces to $\cR^g(\dd\phi)=0$, or, equivalently, taking the interior product with an arbitrary tangent field $v_1\in\mathfrak{X}(M)$ and using \eqref{eq:riemann2},
\begin{equation}
\label{eq:yangmillscontract}
\dd\phi \wedge \left( \frac12{s_g}v_1 - \Ric^g(v_1) \right) = 0 \,.
\end{equation}
Therefore, $\Ric^g$ has eigenvalue $\frac{s_g}2$ in the directions orthogonal to $\dd\phi$ on $U$. \medskip

\noindent
Moreover, \eqref{eq:ricdphi0} and \eqref{eq:yangmillscontract} together show that we can write the Ricci tensor on $U$ as
\begin{equation}
\label{eq:ricciform}
\Ric^g = \frac12{s_g}\left(g - u \otimes u\right) \,. 
\end{equation}
From \eqref{eq:ricciform} we calculate $\Ric^g\circ_g\Ric^g = \frac{s_g^2}{4}(g-u \otimes u)$ and
\begin{equation}
\label{eq:riccinormU}
|\Ric^g|_g^2 = \frac12{s_g^2}\,.
\end{equation}

\noindent
Substituting \eqref{eq:normriemann} and \eqref{eq:riccinormU} in the scalar identity \eqref{eq:scalaridentity}, we obtain at each point of $U$:
\begin{equation*}
|\dd\phi|_g^2 - \frac{5}{2}e^{2\phi} + s_g + \frac{3\kappa}{4}s_g^2 = 0 \,.
\end{equation*}
Define the function $f:=|\dd\phi|_g^2 - \frac{5}{2}e^{2\phi}$ on $M$. The equality above shows that
\begin{equation*}
f = - s_g - \frac{3\kappa}{4} s_g^2
\end{equation*}
at each point of $U$, and, since $s_g$ is constant on $M$, $f$ is constant on $U$. However, $f$ is, by construction, also locally constant on the interior of the complement of $U$ in $M$. But the union of $U$ with the interior of its complement is dense on $M$, thus it follows that the above function is locally constant on $M$, hence constant since $M$ is connected. Therefore, $\phi$ takes the same value at a maximum and minimum, i.e. $\phi$ is constant. Then, by Proposition~\ref{prop:classificationphiconstant} the soliton is hyperbolic.
\end{proof}

\noindent
Combining this result with Corollary \ref{cor:rigidity} and Theorem \ref{thm:rigidityessential} we obtain Theorem \ref{thm:rigidityessentialharmonic}, as presented in the introduction.


\phantomsection
\bibliographystyle{JHEP} 


\end{document}